\date{November 8, 2017}
\newtheorem{definition}{Definition}[section]
\newtheorem{theorem}[definition]{Theorem}
\newtheorem{lemma}[definition]{Lemma}
\newtheorem{remark}[definition]{Remark}
\newtheorem{corollary}[definition]{Corollary}
\newtheorem{proposition}[definition]{Proposition}
\begin{document}

\title{Uniform Probability and Natural Density of Mutually Left Coprime Polynomial Matrices over Finite Fields}


\author{Julia Lieb          
}


\maketitle

\begin{abstract}
We compute the uniform probability that finitely many polynomials over a finite field are pairwise coprime and compare the result with the formula one gets using the natural density as probability measure. It will turn out that the formulas for the two considered probability measures asymptotically coincide but differ in the exact values. Moreover, we calculate the natural density of mutually left coprime polynomial matrices and compare the result with the formula one gets using the uniform probability distribution. The achieved estimations are not as precise as in the scalar case but again we can show asymptotic coincidence.
\end{abstract}

\section{Introduction}
Polynomial matrices over finite fields play an important role in various mathematical areas, e.g. for the investigation of discrete-time linear systems \cite{rb}, \cite{Fu-He15} or in the theory of convolutional codes \cite{ros}. For many of these applications, coprimeness conditions for the considered matrices are essential, \cite{Fu-He15}.

A polynomial matrix $D\in\mathbb F[z]^{n\times m}$ is called left prime if there exists $X\in\mathbb F[z]^{m\times n}$ with $DX=I$, where $I$ denotes the identity matrix. It is easily shown that this is equivalent to the condition that the fullsize minors of $D$ are coprime; see e.g. \cite{you}. In this paper, we will need another characterization of left primeness, namely that $D$ has to be of full row rank for every $z\in\overline{\mathbb F}$, which clearly is equivalent to the fact that it can be completed to a unimodular matrix, i.e. to a matrix with nonzero constant determinant. That it is possible to characterize left primeness by this last condition is part of the famous Quillen-Suslin theorem, also known as Serre conjecture \cite{lam}, which was formulated in 1957 for polynomial matrices in several variables $z_1,\hdots,z_k$. Already in 1958, Seshadri \cite{ses} proved its correctness in principal ideal domains and therefore, in the cases $k=1$ and $k=2$. The final proof for the general case followed in 1976 \cite{quill}, \cite{sus}.\\
We use the one-dimensional version of this theorem to compute the probability of left primeness for specially structured polynomial matrices using two different probability measures, namely uniform probability and natural density. For the case $n=1$, i.e. for matrices consisting only of one row, the probability of left primeness coincides with the probability of coprimeness for polynomials, which was computed in \cite{gar} to be equal to $1-t^{m-1}$, where $t:=|\mathbb F|^{-1}$. For matrices of arbitrary sizes, Guo and Yang \cite{guo} computed the natural density of left primeness to be equal to $\prod_{j=m-n}^{m-1}(1-t^j)$, using techniques from \cite{maze}, where this computation was done for integer matrices. Unfortunately, their proof contains a mistake. This has already been noticed by Micheli and Schnyder \cite{reto},\cite{gm}. In \cite[Problem 4.2, Theorem 4.4]{gm}, this problem is solved in a far more general context. The author computes densities over integrally closed subrings of global function fields using the definition of density given in \cite{gr}. The used strategy could also be found in \cite{fg} where the density of coprime algebraic integers of a number field is calculated.

In Theorem 9 of \cite{l}, the probability that a matrix of the form $[D_1\ D_2]\in\mathbb F[z]^{m\times 2m}$ with $\deg(\det(D_i))=n_i\in\mathbb N$ is left prime, i.e. that $D_1\in\mathbb F[z]^{m\times m}$ and $D_2\in\mathbb F[z]^{m\times m}$ are left coprime, was calculated. 
It turns out that the obtained formula, namely $1-t^m+O(t^{m+1})$ for $t\rightarrow 0$, asymptotically coincides with the formula for the natural density of left primeness for an arbitrary polynomial matrix from $\mathbb F[z]^{m\times 2m}$, computed in \cite{guo} -  respectively \cite{gm} - to be equal to $\prod_{j=m}^{2m-1}(1-t^j)$.

According to Proposition 10.3 of \cite{Fu-He15}, the property of $N$ matrices from $\mathbb F[z]^{m\times m}$ to be mutually left coprime is equivalent to the left primeness of a specially structured matrix from $\mathbb F^{(N-1)m\times mN}$. In \cite{hjL}, the uniform probability of mutual left coprimeness was calculated for polynomials with fixed degrees, i.e. for $m=1$, where mutual left coprimeness and pairwise coprimeness coincide. This result was generalized in \cite{l}, obtaining a probability of $1-\sum_{y=2}^{m+1}\binom{N}{y}t^m+O(t^{m+1})$ for the probability of mutual left coprimeness for $N$ matrices from $\mathbb F[z]^{m\times m}$ whose degrees of the determinant are fixed. In this article, we firstly improve the estimation for the case $m=1$ and secondly, compute the natural density of mutual left coprimeness in the cases $m=1$ and $m\in\mathbb N$. It will turn out that the formulas for uniform probability distribution and natural density asymptotically coincide in all computed cases.

The case $m=1$, i.e. natural density of pairwise coprimeness, was already considered in \cite{guo2}. However, the proof there contains the same mistake as mentioned before in the context of arbitrary rectangular matrices \cite{guo}. We will show a way to fix this problem in our article.

The paper is structured as follows. In Section 2, we provide some basic definitions, properties and formulas, which we will need in the following sections. Section 3 deals with the case $m=1$, i.e.  with uniform probability and natural density of pairwise coprime polynomials. It turns out that the obtained asymptotic expressions for uniform probability and natural density coincide. After that, in Section 4, we prove our main result, Theorem \ref{main}, which provides an asymptotic formula for the natural density of mutually left coprime polynomial matrices. Finally, we compare this result with the uniform probability that polynomial matrices are mutually left coprime and could again observe asymptotical identicalness.

\section{Preliminaries}
\subsection{Coprimeness of Polynomial Matrices}
In this subsection, we will provide some basic definitions and properties concerning polynomial matrices over an arbitrary field $\mathbb F$. Throughout this paper, $\overline{\mathbb F}$ should denote the algebraic closure of $\mathbb F$.

\begin{definition}\ \\
A polynomial matrix $Q\in\mathbb F[z]^{m\times m}$ is called \textbf{nonsingular} if $\det(Q(z))\not\equiv 0$. It is called \textbf{unimodular} if $\det(Q(z))\neq 0$ for all $z\in\overline{\mathbb F}$, i.e. if $\det(Q(z))$ is a nonzero constant. This is the case if and only if $Q$ is invertible in $\mathbb F[z]^{m\times m}$. Hence, one denotes the group of unimodular $m\times m$-matrices over $\mathbb F[z]$ by $GL_m(\mathbb F[z])$.
\end{definition}

\begin{definition}\ \\
A polynomial matrix $H\in\mathbb F[z]^{p\times m}$ is called a \textbf{common left divisor} of $H_i\in\mathbb F[z]^{p\times m_i}$ for $i=1,\hdots,N$ if there exist matrices $X_i\in\mathbb F[z]^{m\times m_i}$ with $H_i(z)=H(z)X_i(z)$ for $i=1,\hdots,N$. It is called a \textbf{greatest common left divisor}, which is denoted by $H=\operatorname{gcld}(H_1,\hdots,H_N)$, if for any other common left divisor $\tilde{H}\in\mathbb F[z]^{p\times\tilde{m}}$ there exists $S(z)\in\mathbb F[z]^{\tilde{m}\times m}$ with $H(z)=\tilde{H}(z)S(z)$.\\
A polynomial matrix $E\in\mathbb F[z]^{p\times m}$ is called a \textbf{common left multiple} of $E_i\in\mathbb F[z]^{m_i\times m}$ for $i=1,\hdots,N$ if there exist matrices $X_i\in\mathbb F[z]^{p\times m_i}$ with $X_i(z)E_i(z)=E(z)$ for $i=1,\hdots,N$. It is called a \textbf{least common left multiple}, which is denoted by $E=\operatorname{lclm}(E_1,\hdots,E_N)$, if for any other common left multiple $\tilde{E}\in\mathbb F[z]^{\tilde{p}\times m}$, there exists $R(z)\in\mathbb F[z]^{\tilde{p}\times p}$ with $R(z)E(z)=\tilde{E}(z)$.\\
One defines a \textbf{(greatest) common right divisor}, which is denoted by $\operatorname{gcrd}$, and a \textbf{(least) common right multiple}, which is denoted by $\operatorname{lcrm}$, analoguely.\\
Note that $\operatorname{gcd}$ and $\operatorname{lcm}$ are only unique up to multiplication with an unimodular matrix but this does not matter for our further considerations.
\end{definition}

\begin{definition}\ \\
Polynomial matrices $H_i\in\mathbb F[z]^{p\times m_i}$ are called \textbf{left coprime} if there exists $X\in\mathbb F[z]^{m\times p}$ such that $H=\operatorname{gcld}(H_1,\hdots,H_N)$ satiesfies $HX=I_p$. In particular, one polynomial matrix $H\in\mathbb F[z]^{p\times m}$ is called \textbf{left prime} if there exists $X\in\mathbb F[z]^{m\times p}$ with $HX=I_p$. Analoguely, one defines the property to be \textbf{right coprime} or \textbf{right prime}, respectively. Note that in the case $p=m$, right primeness and left primeness are equivalent to the property to be unimodular.
\end{definition}

\begin{theorem}\cite[Theorem 2.27]{Fu-He15}\ \\
The polynomial matrices $H_i\in\mathbb F[z]^{p\times m_i}$ are left coprime if and only if\\
$\operatorname{rk}(H_1(z),\hdots,H_N(z))=p$ for all $z\in\overline{\mathbb F}[z]$.
\end{theorem}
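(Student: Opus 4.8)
The plan is to transfer the whole question to the single stacked matrix $A(z):=[\,H_1(z)\ \cdots\ H_N(z)\,]\in\mathbb F[z]^{p\times M}$ with $M:=\sum_{i=1}^N m_i$, and to observe that the $H_i$ being \emph{left coprime} is exactly the statement that $A$ admits a polynomial right inverse, i.e.\ that $A$ is left prime as a single matrix. Indeed, if $A$ does not have full row rank over the rational function field $\mathbb F(z)$, then $\operatorname{rk}(A(z))<p$ for \emph{every} $z\in\overline{\mathbb F}$ and at the same time no greatest common left divisor can have a right inverse, so both sides of the asserted equivalence are false; hence I may assume $\operatorname{rk}_{\mathbb F(z)}(A)=p$ throughout. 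First I would extract the gcld by column reduction: since $\mathbb F[z]$ is a principal ideal domain, there is a unimodular $V\in GL_M(\mathbb F[z])$ with $AV=[\,H\ \ 0\,]$, where $H\in\mathbb F[z]^{p\times p}$ is a greatest common left divisor of $H_1,\dots,H_N$. Because $H$ is square, it is left prime if and only if it is unimodular (as noted in the definition of left/right primeness above), so by definition the $H_i$ are left coprime precisely when $H\in GL_p(\mathbb F[z])$.

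It remains to connect unimodularity of $H$ with the pointwise rank of $A$. Multiplication by the unimodular matrix $V$ is invertible at every point, so $\operatorname{rk}(A(z))=\operatorname{rk}([\,H(z)\ \ 0\,])=\operatorname{rk}(H(z))$ for all $z\in\overline{\mathbb F}$; thus $\operatorname{rk}(A(z))=p$ for all $z$ is equivalent to $\det H(z)\neq 0$ for all $z\in\overline{\mathbb F}$. By the characterization of unimodularity already recorded (a square polynomial matrix lies in $GL_p(\mathbb F[z])$ iff its determinant is a nonzero constant, equivalently has no root in $\overline{\mathbb F}$), this is in turn equivalent to $H\in GL_p(\mathbb F[z])$. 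Chaining these equivalences yields $H_i$ left coprime $\iff$ $H$ unimodular $\iff$ $\operatorname{rk}(A(z))=p$ for all $z\in\overline{\mathbb F}$, which is the claim.

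For the record, the two implications can also be seen directly, and it is worth isolating where the work sits. The forward direction is soft: left coprimeness furnishes $Y_i\in\mathbb F[z]^{m_i\times p}$ with $\sum_i H_iY_i=I_p$ (a right inverse of $A$), and evaluating this identity at an arbitrary $z_0\in\overline{\mathbb F}$ immediately forces $\operatorname{rk}(A(z_0))=p$. The converse is the substantive half and is exactly the one-dimensional Quillen--Suslin phenomenon invoked in the introduction: assuming the pointwise rank is everywhere $p$, one puts $A$ into Smith normal form $UAV=[\,\operatorname{diag}(d_1,\dots,d_p)\ \ 0\,]$ with $U\in GL_p(\mathbb F[z])$, $V\in GL_M(\mathbb F[z])$ and invariant factors $d_1\mid\cdots\mid d_p$; the rank condition forbids any $d_i$ from vanishing on $\overline{\mathbb F}$, so every $d_i$ is a nonzero constant and a polynomial right inverse of $A$ can be written down explicitly from $U$, $V$ and $\operatorname{diag}(d_1,\dots,d_p)^{-1}$. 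I expect the main obstacle to be this converse, i.e.\ justifying the gcld/Smith extraction over the PID $\mathbb F[z]$ cleanly (that the unimodular column factor $V$ genuinely produces a greatest common left divisor, and that the pointwise rank controls the invariant factors); the evaluation argument and the bookkeeping of the reduction are routine by comparison.
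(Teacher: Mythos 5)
Your argument is correct. Note that the paper itself offers no proof of this statement: it is quoted verbatim from \cite[Theorem 2.27]{Fu-He15}, so there is nothing internal to compare against. Your reduction to the stacked matrix $A=[H_1\ \cdots\ H_N]$, the extraction of a square gcld via a unimodular column operation $AV=[H\ \ 0]$, and the identification ``$H$ unimodular $\iff$ $\det H$ has no root in $\overline{\mathbb F}$ $\iff$ $\operatorname{rk}A(z)=p$ everywhere'' is exactly the standard PID/Smith-form proof, and it is consistent with the paper's own conventions (Remark \ref{hauc}(a) and the observation that square left prime matrices are unimodular). Two small points you flag yourself and which do need the half-sentence of justification: that the $H$ produced by column reduction really is a gcld follows because any common left divisor $\tilde H$ satisfies $A=\tilde H[Y_1\ \cdots\ Y_N]$ and hence left-divides $H=AV\bigl[\begin{smallmatrix}I_p\\0\end{smallmatrix}\bigr]$; and in the degenerate case $\operatorname{rk}_{\mathbb F(z)}A<p$ the claim that no gcld admits a right inverse follows because $A$ itself is a common left divisor (via selection matrices), so every gcld factors through $A$ and has row rank below $p$. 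With those two remarks made explicit the proof is complete.
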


\begin{remark}\label{hauc}
\item[(a)]
Left coprimeness of $H_i\in\mathbb F[z]^{p\times m_i}$ is equivalent to left primeness of the matrix $(H_1,\hdots,H_N)$.
\item[(b)]
A rectangular matrix $H\in\mathbb F[z]^{p\times m}$ with $p\leq m$ is left prime if and only if its $p\times p$-minors are coprime; see e.g. \cite{you}. 
\end{remark}

\begin{definition}\ \\
Nonsingular polynomial matrices $D_1,\hdots, D_N\in\mathbb F[z]^{m\times m}$ are called \textbf{mutually left coprime} if for each $i=1,\hdots,N$, $D_i$ is left coprime with $\operatorname{lcrm}\{D_j\}_{j\neq i}$.
\end{definition}

\begin{theorem}\cite[Proposition 10.3]{Fu-He15}\label{mutcrit} \\
Nonsingular polynomial matrices $D_1,\hdots, D_N\in\mathbb F[z]^{m\times m}$ are mutually left coprime if and only if
$$\mathcal{D}_N:=\left[\begin{array}{cccc}
D_1 & D_2 &  & 0 \\ 
 & \ddots & \ddots &  \\ 
0 &  & D_{N-1} & D_N
\end{array}\right]$$
 is left prime.
\end{theorem}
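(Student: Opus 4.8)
The plan is to collapse both sides of the equivalence to a single pointwise linear‑algebra condition over $\overline{\mathbb F}$ and then check that the two collapses coincide. By the characterization of left primeness recalled in the introduction (via Quillen--Suslin), $\mathcal{D}_N$ is left prime if and only if $\mathcal{D}_N(z_0)$ has full row rank $(N-1)m$ for every $z_0\in\overline{\mathbb F}$; since each $D_i$ is nonsingular, $D_i(z_0)$ is invertible off the finite zero set of $\det D_i$, so only the finitely many $z_0\in\bigcup_i\{\det D_i=0\}$ matter. For such a $z_0$ I would write out the left kernel of $\mathcal{D}_N(z_0)$ explicitly: a row vector $(v_1,\dots,v_{N-1})$ lies in it precisely when $v_1D_1(z_0)=0$, $(v_{j-1}+v_j)D_j(z_0)=0$ for $2\le j\le N-1$, and $v_{N-1}D_N(z_0)=0$. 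Writing $L_H(z_0):=\{\,w:\,wH(z_0)=0\,\}$ and $L_j:=L_{D_j}(z_0)$, and substituting $w_1:=v_1$, $w_j:=v_{j-1}+v_j$, $w_N:=v_{N-1}$, a short telescoping computation (the alternating sum $\sum_{j=1}^N(-1)^{j-1}w_j$ vanishes identically) shows that a nonzero such $(v_j)$ exists if and only if there are $w_j\in L_j$, not all zero, with $\sum_{j=1}^N(-1)^{j-1}w_j=0$. Since signs do not affect the existence of a nontrivial dependence drawn one vector from each $L_j$, this says exactly that $L_1,\dots,L_N$ are \emph{not} independent. Hence $\mathcal{D}_N(z_0)$ has full row rank if and only if the subspaces $L_1,\dots,L_N\subseteq\overline{\mathbb F}^{1\times m}$ form a direct sum.

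On the other side, I would apply the rank criterion \cite[Theorem 2.27]{Fu-He15} to each pair $D_i$ and $E_i:=\operatorname{lcrm}\{D_j\}_{j\ne i}$: these are left coprime if and only if $L_{D_i}(z_0)\cap L_{E_i}(z_0)=0$ for all $z_0$, so mutual left coprimeness is the statement that $L_{D_i}(z_0)\cap L_{E_i}(z_0)=0$ for every $i$ and every $z_0$. The bridge between the two pictures is the key lemma
\[
L_{E_i}(z_0)=\sum_{j\ne i}L_{D_j}(z_0),
\]
equivalently, after passing to orthogonal complements, $\operatorname{col}E_i(z_0)=\bigcap_{j\ne i}\operatorname{col}D_j(z_0)$. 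Granting this, both conditions reduce to the pointwise independence of $\{L_j(z_0)\}_{j=1}^N$: a family of subspaces is independent exactly when $L_i\cap\sum_{j\ne i}L_j=0$ for each $i$, and by the lemma this last identity is precisely $L_{D_i}(z_0)\cap L_{E_i}(z_0)=0$. Reading the chain ``$\mathcal{D}_N$ left prime $\Leftrightarrow$ $\{L_j(z_0)\}$ independent for all $z_0$ $\Leftrightarrow$ $\forall i\,\forall z_0:\,L_{D_i}\cap L_{E_i}=0$ $\Leftrightarrow$ mutually left coprime'' then closes the equivalence.

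The inclusion $\sum_{j\ne i}L_{D_j}(z_0)\subseteq L_{E_i}(z_0)$ is immediate, because a common right multiple factorization $E_i=D_jX_j$ gives $wD_j(z_0)=0\Rightarrow wE_i(z_0)=0$; I note that this easy half already yields one implication, namely ``mutually left coprime $\Rightarrow$ $\mathcal{D}_N$ left prime''. The main obstacle is the reverse inclusion $L_{E_i}(z_0)\subseteq\sum_{j\ne i}L_{D_j}(z_0)$, which is where the minimality built into the $\operatorname{lcrm}$ must enter: one must rule out that $E_i$ drops rank at $z_0$ by more than the common column space $\bigcap_{j\ne i}\operatorname{col}D_j(z_0)$ forces. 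I expect to establish this by bookkeeping with the Smith form and the degree of $\det E_i$ (so that the local coranks are counted with the correct multiplicities), combined with the defining factorization property of the least common right multiple that any common right multiple $\tilde E$ satisfies $\tilde E=E_iR$. This localization of the lcrm, rather than the surrounding linear algebra, is the crux of the argument.
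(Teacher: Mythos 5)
The paper itself offers no proof of this statement (it is quoted from \cite[Proposition 10.3]{Fu-He15}), so your argument has to stand on its own. Your first reduction is correct and cleanly done: the substitution $w_1=v_1$, $w_j=v_{j-1}+v_j$, $w_N=v_{N-1}$ does identify the left kernel of $\mathcal{D}_N(z_0)$ with the nontrivial relations among the subspaces $L_j(z_0)$, so left primeness of $\mathcal{D}_N$ is equivalent to pointwise independence of $L_1(z_0),\dots,L_N(z_0)$; and the easy inclusion $\sum_{j\neq i}L_{D_j}(z_0)\subseteq L_{E_i}(z_0)$ does give the implication ``mutually left coprime $\Rightarrow$ $\mathcal{D}_N$ left prime''. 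The gap is in the converse, and it is more than a deferred computation: the key lemma $L_{E_i}(z_0)=\sum_{j\neq i}L_{D_j}(z_0)$, which you assert unconditionally, is false. Take $m=2$, $D_1=\begin{pmatrix}1&0\\0&z^2\end{pmatrix}$, $D_2=\begin{pmatrix}1&0\\z&z^2\end{pmatrix}$. The intersection of the column modules $D_1\mathbb F[z]^2\cap D_2\mathbb F[z]^2$ is generated by $(z,0)^\top$ and $(0,z^2)^\top$, so $\operatorname{lcrm}(D_1,D_2)=\operatorname{diag}(z,z^2)$ up to a right unimodular factor; its value at $z_0=0$ is the zero matrix, while $\operatorname{col}D_1(0)\cap\operatorname{col}D_2(0)$ is the span of $e_1$. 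Dually, $L_{\operatorname{lcrm}(D_1,D_2)}(0)$ is all of $\overline{\mathbb F}^{1\times 2}$, whereas $L_{D_1}(0)+L_{D_2}(0)$ is one-dimensional. So no bookkeeping with Smith forms can prove the lemma in the generality in which you state it.

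What saves your strategy is that in the hard direction you may assume $\mathcal{D}_N$ left prime, which by your own first reduction forces $L_j(z_0)\cap L_k(z_0)=0$ for all $j\neq k$ and all $z_0$, i.e.\ pairwise left coprimeness of the $D_j$ --- and that excludes the counterexample. Under pairwise left coprimeness the lemma does hold: writing $M_j:=D_j\mathbb F[z]^m$, left coprimeness gives $M_1+M_2=\mathbb F[z]^m$, hence $\mathbb F[z]^m/(M_1\cap M_2)\cong \mathbb F[z]^m/M_1\oplus\mathbb F[z]^m/M_2$; reducing modulo $(z-z_0)$ yields $\dim\operatorname{col}E(z_0)=\dim\operatorname{col}D_1(z_0)+\dim\operatorname{col}D_2(z_0)-m=\dim\bigl(\operatorname{col}D_1(z_0)\cap\operatorname{col}D_2(z_0)\bigr)$, which combined with the easy inclusion gives equality, and this must then be iterated for the $(N-1)$-fold $\operatorname{lcrm}$. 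None of this appears in your write-up --- you explicitly defer the crux --- so as it stands the proposal establishes only one of the two implications.
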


\subsection{Probability Distributions and Basic Counting Formulas}

To compute the probability that a mathematical object has a special property, it is necessary to count mathematical objects. Therefore, in the following, we restrict our considerations to a finite field $\mathbb{F}$ with cardinality $|\mathbb F|$.\\
Firstly, $\mathbb F$ should be endowed with the uniform probability distribution that assigns to each field element the same probability
$$t=\frac{1}{|\mathbb{F}|}.$$ 
In addition to computing probabilties with the uniform distribution, which is only defined for finite sets, we will compare these results with the results one gets using another definition of probability, namely the concept of natural density as defined in \cite{guo} for infinite sets:

\begin{definition}\label{nd}\ \\
To enumerate $\mathbb F[z]$, assign the number $k=\sum_{i=0}^{\infty}a_i(\frac{1}{t})^{i}$ to the polynomial $f_k(z)=\sum_{i=0}^{\infty}a_iz^{i}$. In particular, $f_0\equiv 0$. Moreover, let $M_n$ be the set of tuples $(D_1,\hdots,D_N)\in(\mathbb F[z]^{l\times m})^N$ for which the entries of $D_i$ are elements of the set $\{f_0,\hdots,f_n\}$ for $i=1,\hdots,N$. The natural density of a set $E\subset (\mathbb F[z]^{l\times m})^N$ in $(\mathbb F[z]^{l\times m})^N$ is defined as $\lim_{n\rightarrow\infty}\frac{|E\cap M_n|}{|M_n|}$.
\end{definition}

Moreover, for later computations, we will need the following lemmata, which provide well-known formulas for the determination of cardinalities.

\begin{lemma}\cite[S. 455]{lini}\label{glcar}\ \\
For $1\leq r\leq \min(k,n)$, denote by $N(k,n,r)$ the number of matrices from $\mathbb F^{k\times n}$ that have rank $r$. Then, it holds
$$N(k,n,r)=t^{-nr}\prod_{i=n-r+1}^n(1-t^i)\cdot \prod_{i=0}^{r-1}\frac{t^{i-k}-1}{t^{-(i+1)}-1}.$$
In particular, the number of invertible $n\times n$-matrices over $\mathbb F$ is equal to
$$|Gl_n(\mathbb F)|=t^{-n^2}\prod_{j=1}^n(1-t^j).$$
\end{lemma}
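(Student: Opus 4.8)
The plan is to count rank-$r$ matrices by decomposing each one according to its column space (image), and then to recognize the two displayed products as exactly the two factors of this decomposition. Throughout I would write $q=|\mathbb{F}|=t^{-1}$, so that the elementary counts come out in their familiar form, and only convert back to $t$ at the very end.

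First I would view a matrix $A\in\mathbb{F}^{k\times n}$ as the linear map $\mathbb{F}^n\to\mathbb{F}^k$, $x\mapsto Ax$; then $\operatorname{rk}(A)=r$ means precisely that its image is an $r$-dimensional subspace $V\subseteq\mathbb{F}^k$. Every rank-$r$ matrix factors uniquely as a surjection $\mathbb{F}^n\twoheadrightarrow V$ followed by the inclusion $V\hookrightarrow\mathbb{F}^k$, and conversely every such pair yields a distinct rank-$r$ matrix with image $V$; moreover the number of surjections onto $V$ depends only on $\dim V=r$. Hence
$$N(k,n,r)=\bigl(\text{number of }r\text{-dimensional subspaces of }\mathbb{F}^k\bigr)\cdot\bigl(\text{number of surjections }\mathbb{F}^n\twoheadrightarrow\mathbb{F}^r\bigr).$$
So the problem splits into two independent, purely elementary counts, and the substance of the write-up is matching each of them with one of the two products in the statement.

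For the second factor, a surjection $\mathbb{F}^n\to\mathbb{F}^r$ is the same as an $r\times n$ matrix of full row rank $r$, i.e. an ordered $r$-tuple of linearly independent vectors in $\mathbb{F}^n$. Choosing them one at a time, each new vector avoiding the span of its predecessors, gives $\prod_{i=0}^{r-1}(q^n-q^i)$. Pulling out $q^i$ from the $i$-th factor rewrites this as $q^{r(r-1)/2}\prod_{i=n-r+1}^n(q^i-1)$, and using $q^i-1=q^i(1-t^i)$ collects the powers of $q$ into $q^{nr}=t^{-nr}$, producing exactly $t^{-nr}\prod_{i=n-r+1}^n(1-t^i)$. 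For the first factor I would count ordered independent $r$-tuples in $\mathbb{F}^k$, namely $\prod_{i=0}^{r-1}(q^k-q^i)$, and divide by $|GL_r(\mathbb{F})|=\prod_{i=0}^{r-1}(q^r-q^i)$, the number of ordered bases of a fixed $r$-dimensional space; the quotient is the Gaussian binomial coefficient $\binom{k}{r}_q=\prod_{i=0}^{r-1}\frac{q^{k-i}-1}{q^{i+1}-1}$, which upon substituting $q=t^{-1}$ is precisely $\prod_{i=0}^{r-1}\frac{t^{i-k}-1}{t^{-(i+1)}-1}$.

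Multiplying the two factors yields the claimed formula for $N(k,n,r)$. The special case $k=n=r$ then follows either directly, since $N(n,n,n)=\prod_{i=0}^{n-1}(q^n-q^i)$ counts the ordered bases of $\mathbb{F}^n$, or by substitution into the general formula, where the subspace factor $\binom{n}{n}_q$ telescopes to $1$ and the power count collapses to $t^{-n^2}\prod_{j=1}^n(1-t^j)$. I expect no conceptual obstacle: both counts are classical, and the only real care needed is the bookkeeping that converts the products of $(q^a-q^b)$ into the specific mixed $t$-form in which the lemma is phrased, in particular tracking the exponent sums $\sum_{i=n-r+1}^n i$ and $\sum_{i=0}^{r-1} i$ correctly.
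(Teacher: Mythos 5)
Your proof is correct: the factorization of a rank-$r$ matrix through its image gives $N(k,n,r)=\binom{k}{r}_q\cdot\#\{\text{surjections }\mathbb{F}^n\twoheadrightarrow\mathbb{F}^r\}$, and your bookkeeping correctly identifies the surjection count with $t^{-nr}\prod_{i=n-r+1}^n(1-t^i)$ and the Gaussian binomial with $\prod_{i=0}^{r-1}\frac{t^{i-k}-1}{t^{-(i+1)}-1}$. The paper itself offers no proof of this lemma --- it is quoted directly from Lidl--Niederreiter --- so there is nothing to compare against; your argument is the standard one and would serve as a self-contained justification.
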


\begin{lemma}\label{ine}(Inclusion-Exclusion Principle)\ \\
Let $A_1,\hdots,A_n$ be finite sets and $X=\bigcup_{i=1}^n A_i$. For $I\subset\{1,\hdots,n\}$, define $A_I:=\bigcap_{i\in I}A_i$. Then, it holds
$$|X|=\sum_{\emptyset\neq I\subset\{1,\hdots,n\}}(-1)^{|I|-1}|A_I|.$$
\end{lemma}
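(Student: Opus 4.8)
The plan is to prove the identity by a double-counting argument that tracks the contribution of each individual element $x\in X$ to both sides of the claimed equation, thereby reducing the whole statement to an elementary binomial identity. An alternative route would be induction on $n$, using $|A\cup B|=|A|+|B|-|A\cap B|$ together with the distributivity $\left(\bigcup_{i=1}^{n-1}A_i\right)\cap A_n=\bigcup_{i=1}^{n-1}(A_i\cap A_n)$; however, the element-wise count is cleaner and more self-contained, so I would present that.

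First I would fix an arbitrary element $x\in X$ and record the index set $J:=\{i\in\{1,\hdots,n\}:x\in A_i\}$. Since $x$ lies in the union $X$, the set $J$ is nonempty; write $k:=|J|\geq 1$. On the left-hand side, $x$ contributes exactly $1$ to $|X|$. To find its contribution to the right-hand side, the key observation is the equivalence $x\in A_I=\bigcap_{i\in I}A_i \iff I\subseteq J$. Hence $x$ is counted by the term $(-1)^{|I|-1}|A_I|$ precisely for the nonempty subsets $I\subseteq J$, so its total contribution to the right-hand side equals $\sum_{\emptyset\neq I\subseteq J}(-1)^{|I|-1}$. Grouping these subsets by their cardinality $j=|I|$, of which there are $\binom{k}{j}$, this sum becomes $\sum_{j=1}^{k}\binom{k}{j}(-1)^{j-1}$.

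The decisive step is then to evaluate this alternating sum. Starting from the binomial theorem $0=(1-1)^k=\sum_{j=0}^{k}\binom{k}{j}(-1)^j$ and isolating the $j=0$ term $\binom{k}{0}=1$, one obtains $\sum_{j=1}^{k}\binom{k}{j}(-1)^{j-1}=1$. Thus $x$ contributes exactly $1$ to the right-hand side as well. Since $x\in X$ was arbitrary, and since any element outside $X$ has $J=\emptyset$ and therefore fails to be counted by any term with nonempty $I$, summing the contributions over all elements yields the asserted equality $|X|=\sum_{\emptyset\neq I\subset\{1,\hdots,n\}}(-1)^{|I|-1}|A_I|$. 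There is no serious obstacle here; the only point requiring care is the equivalence $x\in A_I\iff I\subseteq J$, which is exactly what collapses the verification to the binomial identity above.
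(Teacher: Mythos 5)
Your proof is correct and complete: the element-wise double count, the equivalence $x\in A_I\iff I\subseteq J$, and the evaluation $\sum_{j=1}^{k}\binom{k}{j}(-1)^{j-1}=1$ via $(1-1)^k=0$ are all handled properly, and you correctly note that elements outside $X$ contribute to neither side. The paper itself states this lemma as a well-known counting formula without proof, so there is no argument to compare against; your approach is the standard one, and the binomial identity you invoke is essentially the same computation the paper carries out separately in Lemma~\ref{bin}.
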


\begin{lemma}\label{2coprime}\cite{gar} \\
The probability that $N$ monic polynomials $d_1,\hdots,d_N\in\mathbb F[z]$ with $\deg(d_i)=n_i\in\mathbb N$ for $i=1,\hdots, N$ are coprime is equal to $1-t^{N-1}$.
\end{lemma}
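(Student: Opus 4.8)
The plan is to determine the number of coprime $N$-tuples of monic polynomials with prescribed degrees by a generating-function argument that exploits the unique factorization of $\mathbb F[z]$. Write $q=|\mathbb F|=t^{-1}$, so that there are exactly $q^{n}$ monic polynomials of degree $n$. For $n_1,\dots,n_N\ge 0$ let $c(n_1,\dots,n_N)$ be the number of tuples $(e_1,\dots,e_N)$ of monic polynomials with $\deg e_i=n_i$ that are coprime, and collect these numbers into the formal power series
\[
F(u_1,\dots,u_N):=\sum_{n_1,\dots,n_N\ge 0}c(n_1,\dots,n_N)\,u_1^{n_1}\cdots u_N^{n_N}.
\]
Since the total number of tuples of degrees $(n_1,\dots,n_N)$ equals $q^{n_1+\cdots+n_N}$, the sought probability is $c(n_1,\dots,n_N)/q^{n_1+\cdots+n_N}$, and it suffices to compute $c$.

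The key step is the observation that every tuple $(d_1,\dots,d_N)$ of monic polynomials factors uniquely as $d_i=g\,e_i$, where $g=\gcd(d_1,\dots,d_N)$ is monic and $(e_1,\dots,e_N)$ is a coprime tuple of monic polynomials, so that $\deg d_i=\deg g+\deg e_i$. This bijection translates into the generating-function identity
\[
\prod_{i=1}^{N}\Big(\sum_{n\ge 0}q^{n}u_i^{n}\Big)=\Big(\sum_{k\ge 0}q^{k}(u_1\cdots u_N)^{k}\Big)\cdot F(u_1,\dots,u_N),
\]
since the left-hand side enumerates all tuples, the first factor on the right accounts for the common divisor $g$ (contributing $(u_1\cdots u_N)^{\deg g}$), and $F$ accounts for the coprime part. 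Summing the geometric series yields $\prod_i(1-qu_i)^{-1}$ on the left and $(1-qu_1\cdots u_N)^{-1}$ for the $g$-factor, whence
\[
F(u_1,\dots,u_N)=\frac{1-q\,u_1\cdots u_N}{\prod_{i=1}^{N}(1-qu_i)}.
\]

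It then remains to read off the coefficient of $u_1^{n_1}\cdots u_N^{n_N}$. Expanding $\prod_i(1-qu_i)^{-1}=\sum_{k_1,\dots,k_N\ge 0}q^{k_1+\cdots+k_N}\prod_i u_i^{k_i}$ and multiplying by $1-q\,u_1\cdots u_N$, the term from $1$ contributes $q^{n_1+\cdots+n_N}$, while the term from $-q\,u_1\cdots u_N$ contributes $-q\cdot q^{(n_1-1)+\cdots+(n_N-1)}=-q^{\,n_1+\cdots+n_N-N+1}$; here the hypothesis $n_i\in\mathbb N$, i.e. $n_i\ge 1$ for all $i$, is exactly what guarantees that this second term is present. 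Hence
\[
c(n_1,\dots,n_N)=q^{n_1+\cdots+n_N}-q^{\,n_1+\cdots+n_N-(N-1)}=q^{n_1+\cdots+n_N}\big(1-q^{-(N-1)}\big),
\]
and dividing by $q^{n_1+\cdots+n_N}$ produces the probability $1-q^{-(N-1)}=1-t^{N-1}$, as claimed.

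The only genuinely delicate point is the gcd-factorization bijection underlying the generating-function identity; once that is justified the remainder is a routine coefficient extraction. As an alternative one could argue by inclusion--exclusion over the monic irreducible polynomials dividing all of $d_1,\dots,d_N$, using that, for a uniformly random monic polynomial of fixed (sufficiently large) degree, divisibility by distinct monic irreducibles constitutes independent events; this, however, essentially reconstructs the same Euler-product computation in a less transparent form.
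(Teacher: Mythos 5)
The paper does not prove this lemma at all: it is quoted verbatim from the reference [gar] (Garcia-Armas, Ghorpade, Ram) and used as a black box, so there is no internal proof to compare against. Your argument is a correct and self-contained substitute. The gcd-factorization bijection $(d_1,\dots,d_N)\leftrightarrow\big(g,(e_1,\dots,e_N)\big)$ with $g=\gcd(d_1,\dots,d_N)$ monic and $(e_1,\dots,e_N)$ coprime is valid by unique factorization, the resulting identity $F(u_1,\dots,u_N)=\big(1-q\,u_1\cdots u_N\big)\prod_{i=1}^{N}(1-qu_i)^{-1}$ is the standard "Euler product/zeta function of $\mathbb F[z]$" computation, and the coefficient extraction correctly uses the hypothesis $n_i\geq 1$ to guarantee the presence of the term $-q^{\,n_1+\cdots+n_N-(N-1)}$ (indeed, if some $n_i=0$ the probability degenerates to $1$, so the hypothesis is genuinely needed). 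Your concluding remark about the inclusion--exclusion alternative is the only place where some care is warranted: divisibility of a uniformly random monic polynomial of fixed degree $n$ by distinct monic irreducibles $p_1,\dots,p_r$ gives exactly independent events only while $\sum_j\deg p_j\leq n$, so that route requires a tail estimate and is not literally "the same computation"; since you only offer it as an aside, this does not affect the validity of your main argument.
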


\begin{lemma}\label{anzirred}\cite{ch}\ \\
The number of monic irreducible polynomials in $\mathbb F[z]$ of degree $j$ is equal to 
$$\varphi_j=\frac{1}{j}\sum_{d\mid j}\mu(d)t^{-j/d}=\frac{1}{j}t^{-j}+O(t^{-(j-1)}),$$
where for $n\in\mathbb N$, $\mu(n):=\begin{cases}(-1)^{|\{p\in\mathbb P\ |\ p\mid n\}|}, & n\ \text{square-free}\\0, & \text{otherwise}\end{cases}$.
\end{lemma}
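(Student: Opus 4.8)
The plan is to reduce the count $\varphi_j$ to a classical divisor-sum recursion and then invert it by Möbius inversion. First I would recall the fundamental factorization of $z^{q^j}-z$ over $\mathbb F$, where $q=t^{-1}=|\mathbb F|$. An irreducible monic polynomial of degree $d$ divides $z^{q^j}-z$ precisely when its splitting field $\mathbb F_{q^d}$ embeds into $\mathbb F_{q^j}$, i.e. exactly when $d\mid j$; moreover $z^{q^j}-z$ is squarefree, since its formal derivative equals $-1$ and is therefore coprime to it, so each such irreducible appears with multiplicity one. Hence
$$z^{q^j}-z=\prod_{d\mid j}\ \prod_{\substack{f\text{ monic irred.}\\\deg f=d}}f(z).$$
Comparing degrees on both sides immediately yields the recursion $q^j=\sum_{d\mid j}d\,\varphi_d$, valid for every $j\in\mathbb N$.

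Next I would apply Möbius inversion to this recursion. Setting $g(j):=q^j$ and $f(d):=d\,\varphi_d$, the relation $g(j)=\sum_{d\mid j}f(d)$ inverts to $f(j)=\sum_{d\mid j}\mu(d)\,g(j/d)$, that is,
$$j\,\varphi_j=\sum_{d\mid j}\mu(d)\,q^{j/d}=\sum_{d\mid j}\mu(d)\,t^{-j/d}.$$
Dividing by $j$ produces the exact formula claimed in the lemma.

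Finally, for the asymptotic statement I would isolate the leading term. The divisor $d=1$ contributes $\tfrac{1}{j}t^{-j}$, while every remaining divisor satisfies $d\geq 2$, whence its exponent obeys $j/d\leq j/2\leq j-1$ for all $j\geq 2$. Since $t\to 0$ forces $t^{-a}\leq t^{-b}$ whenever $a\leq b$, each such term is $O(t^{-(j-1)})$, and as there are only finitely many divisors the whole remainder is $O(t^{-(j-1)})$; the case $j=1$ is trivial since $\varphi_1=t^{-1}$ exactly. I do not foresee a genuine obstacle, as this is the standard enumeration of monic irreducibles over a finite field; the one point demanding care is justifying the squarefree factorization of $z^{q^j}-z$, which is precisely what forces each irreducible to appear with multiplicity one and thereby makes the degree comparison give $q^j=\sum_{d\mid j}d\,\varphi_d$ rather than an inequality. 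As an alternative route avoiding this algebraic input, one could instead expand the Euler product $\prod_f(1-u^{\deg f})^{-1}=\sum_{n\geq 0}q^n u^n=(1-qu)^{-1}$ for the zeta function of $\mathbb F[z]$, take logarithms, and compare coefficients of $u^j$ to recover the same recursion before inverting.
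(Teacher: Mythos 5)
Your proof is correct. The paper itself offers no proof of this lemma — it is quoted from the reference by Chebolu and Minac, who derive the count via the inclusion--exclusion principle applied to the reducible polynomials; your route is the classical one of Gauss, obtaining the recursion $q^j=\sum_{d\mid j}d\,\varphi_d$ from the squarefree factorization of $z^{q^j}-z$ and then applying M\"obius inversion. The two arguments are essentially equivalent (M\"obius inversion over the divisor lattice is a form of inclusion--exclusion), and your justification of the squarefree factorization via the derivative, the degree comparison, and the asymptotic estimate $t^{-j/d}=O(t^{-(j-1)})$ for $d\geq 2$ are all sound; the $j=1$ case is handled correctly as well.
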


\begin{lemma}\label{bin}\ \\
For $M\in\mathbb N$, it holds
$$\sum_{k=1}^M(-1)^k\frac{1}{k!(M-k)!}=-\frac{1}{M!}.$$
\end{lemma}

\begin{proof}
Using the binomial formula, one obtains
$$0=(1-1)^M=\sum_{k=0}^M\binom{M}{k}(-1)^k\quad \Leftrightarrow\quad 0=\sum_{k=0}^M(-1)^k\frac{1}{k!(M-k)!}.$$
\end{proof}

\section{Counting Pairwise Coprime Polynomials}\label{pair}

According to \cite[Corollary 3]{hjL}, the probability that $N$ monic polynomials $d_i\in\mathbb F[z]$ with $\deg(d_i)=n_i\in\mathbb N$ for $i=1,\hdots,N$ are pairwisely coprime is 
\begin{align}\label{j1}
1-\frac{N(N-1)}{2}\cdot t+O(t^2)
\end{align}
if  $1/t$ tends to infinity. 
The method used in \cite{hjL} to prove this result has the advantage that in principle, it is possible to compute the coefficients of $t^j$ for $j\geq 2$ in this asymptotic expansion with the same procedure. But when $j$ is increasing, the computational effort for doing this becomes very large.
In the following, we want to improve this estimation by additionally computing the coefficient of $t^2$. Prior to this, we need to introduce some notation, which was also used in \cite{hjL}.

First, a more general setup should be considered.
Let $n:=(n_1,\ldots,n_N)\in\mathbb N^N$ and $\Gamma$ be an undirected
graph with set of vertices $\mathcal{V}=\{1,\ldots,N\}$ and set of
edges $\mathcal{E}$, having cardinality $E:=|\mathcal{E}|$. The edges
of $\Gamma$ are denoted as $\widehat{ij}$, for suitable $i,j\in \mathcal{V}$ with
$i<j$. For every vertex $l\in \mathcal{V}$ let
\[\mathcal{E}_l:=\{\widehat{ij}\in\mathcal{E}\;|\; i=l\; \text{or}\; j=l\}\] 
denote the set of edges terminating at $l$.
Moreover, $\operatorname{gcd}$ and
$\operatorname{lcm}$ should denote the monic greatest common divisor and least common multiple, respectively. Let $X(n):=\{(d_1,\ldots,d_N)\ |\ d_i\in\mathbb F[z]\ \text{monic with}\ \deg(d_i)=n_i\}$
and $\Gamma(n):=\{(d_1,\ldots,d_N)\in X(n)\ |\ \operatorname{gcd}(d_i,d_j)=1\ \text{for}\ \
\widehat{ij}\in \mathcal{E}\}.$ Clearly, $|X(n)|=t^{-(n_1+...+n_N)}$. 
With each edge $\widehat{ij}$ of $\Gamma$ we associate a monic, square-free
polynomial $k_{\widehat{ij}}(z)\in \mathbb{F}[z]$. We refer to this as a polynomial
labeling of the graph and denote it by $\textbf{k}$. For each polynomial labeling and vertices
$l \in \mathcal{V}$, let
\[
K_l:=\operatorname{lcm}\{k_{\widehat{ij}}\;|\; \widehat{ij}\in \mathcal{E}_l\}.
\]
Then
$$M(n):=\{\textbf{k}\in\mathbb F[z]^E\;|\;k_{\widehat{ij}} \ \text{monic,
square-free for}\ \widehat{ij}\in \mathcal{E},\ \deg(K_l)\leq n_l,\ l\in \mathcal{V}\}$$
is the set of all polynomial labelings $\textbf{k}$ of $\Gamma$
satisfying the degree bounds $\deg (K_l)\leq n_l$ for all vertices $l$.
For each monic square-free polynomial $p$, let $\omega(p)$ denote the number of
irreducible factors of $p$.  
To achieve formula \eqref{j1} as well as our improvement, the following exact expression for the considered probability is used:
 
\begin{theorem}\label{graph}\cite[Theorem 5]{hjL}\ \\
The cardinality of $\Gamma(n)$ is
\begin{equation}\label{eq:result}
|\Gamma(n)|=t^{-(n_1+...+n_N)}\sum_{\textbf{k}\in M(n)}\prod_{\widehat{ij}\in \mathcal{E}}(-1)^{\omega(k_{\widehat{ij}})}\prod_{l=1}^N t^{\deg(K_l)}.
\end{equation}
\end{theorem}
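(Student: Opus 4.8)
The plan is to prove the identity by Möbius inversion over the monic polynomials of $\mathbb{F}[z]$, followed by an interchange of finite sums and a factorization of the resulting count over the vertices of $\Gamma$. The starting point is that for monic $d_i,d_j$ the coprimeness indicator admits the expansion
$$\mathbf{1}[\operatorname{gcd}(d_i,d_j)=1]=\sum_{\substack{k\ \text{monic square-free}\\ k\mid d_i,\ k\mid d_j}}(-1)^{\omega(k)},$$
which is exactly the defining property $\sum_{k\mid g}\mu(k)=\mathbf{1}[g=1]$ of the Möbius function $\mu(k)=(-1)^{\omega(k)}$ on the UFD $\mathbb{F}[z]$ applied to $g=\operatorname{gcd}(d_i,d_j)$; since $\mu$ vanishes on non-square-free arguments, the sum may be restricted to square-free $k$, recovering precisely the square-free labels $k_{\widehat{ij}}$ that appear in $M(n)$.

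First I would take the product of these indicators over all edges $\widehat{ij}\in\mathcal{E}$ to obtain $\mathbf{1}[(d_1,\ldots,d_N)\in\Gamma(n)]$, and then expand the resulting product of sums into a single sum over polynomial labelings $\mathbf{k}=(k_{\widehat{ij}})_{\widehat{ij}\in\mathcal{E}}$, pulling out the global sign $\prod_{\widehat{ij}}(-1)^{\omega(k_{\widehat{ij}})}$. Summing over all $(d_1,\ldots,d_N)\in X(n)$ and interchanging the (finite) order of summation reduces the problem to evaluating, for each fixed labeling $\mathbf{k}$, the number of tuples in $X(n)$ that simultaneously satisfy $k_{\widehat{ij}}\mid d_i$ and $k_{\widehat{ij}}\mid d_j$ for every edge.

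The key step is to convert these edge conditions into vertex conditions: the divisibility requirements imposed on a single component $d_l$ by all edges $\widehat{ij}\in\mathcal{E}_l$ incident to $l$ are jointly equivalent to $K_l=\operatorname{lcm}\{k_{\widehat{ij}}\mid\widehat{ij}\in\mathcal{E}_l\}$ dividing $d_l$. The conditions on distinct components are independent, so the count factorizes across the vertices, and the number of monic $d_l$ of degree $n_l$ divisible by $K_l$ equals $t^{-(n_l-\deg(K_l))}$ when $\deg(K_l)\le n_l$ and is $0$ otherwise. This dichotomy is exactly what collapses the sum over all square-free labelings to the sum over $M(n)$, and multiplying the vertex counts produces $t^{-(n_1+\cdots+n_N)}\prod_{l=1}^N t^{\deg(K_l)}$, which after reinserting the signs yields \eqref{eq:result}.

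I expect the main obstacle to be organizational rather than conceptual: carefully justifying the interchange of summation and, above all, verifying that the per-vertex constraint is genuinely $K_l\mid d_l$ (and not merely implied by it), so that the cutoff $\deg(K_l)\le n_l$ matches the definition of $M(n)$ precisely and no spurious labelings contribute to the final sum.
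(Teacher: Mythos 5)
Your proof is correct and complete: the M\"obius expansion $\mathbf{1}[\gcd(d_i,d_j)=1]=\sum_{k\mid\gcd(d_i,d_j)}\mu(k)$ with $\mu(k)=(-1)^{\omega(k)}$ on square-free monic $k$, the interchange of the two finite sums, the reduction of the edge conditions at a vertex $l$ to the single condition $K_l\mid d_l$, and the count $t^{-(n_l-\deg(K_l))}$ of monic multiples of $K_l$ of degree $n_l$ (zero precisely when $\deg(K_l)>n_l$, which is what truncates the sum to $M(n)$) all check out. The paper does not reprove this statement but cites it from \cite[Theorem 5]{hjL}, where the argument is essentially the same inclusion--exclusion computation you give, so your route matches the source.
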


In the case that all pairs of vertices of $\Gamma$ are connected by an edge, one obtains the probability that $N$ monic polynomials are pairwise coprime. Next, the preceding theorem is used to improve the estimation from formula \eqref{j1}:
%

\begin{theorem}\label{3}\ \\
Let $n_1,...,n_N\in\mathbb N$ and $N_1:=|l\in\{1,\ldots,N\}\ |\ n_l=1|$. Then, the probability that $N$ monic polynomials over $\mathbb F$ of degrees $n_1,\ldots,n_N$ are pairwise coprime is equal to
$$1-\frac{N(N-1)}{2}\cdot t+\frac{1}{24}(N-1)(N-2)(3N^2+11N-12N_1)\cdot t^2+O(t^{3}).$$
\end{theorem}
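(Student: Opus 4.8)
The plan is to apply the exact formula of Theorem \ref{graph} to the complete graph on $N$ vertices. Taking $\mathcal{E}$ to be the set of all pairs $\widehat{ij}$ with $1\le i<j\le N$ (so $E=\binom{N}{2}$), the set $\Gamma(n)$ becomes exactly the set of pairwise coprime tuples, and dividing \eqref{eq:result} by $|X(n)|=t^{-(n_1+\cdots+n_N)}$ expresses the sought probability as
$$P=\sum_{\mathbf{k}\in M(n)}\prod_{\widehat{ij}\in\mathcal{E}}(-1)^{\omega(k_{\widehat{ij}})}\prod_{l=1}^N t^{\deg(K_l)}.$$
My first move is to reorganize this sum by the irreducible factors that occur. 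For each monic irreducible $p$, let $S_p\subseteq\mathcal{E}$ be the set of edges whose label is divisible by $p$, and let $V(S_p)$ be the set of vertices incident to some edge of $S_p$. Since every label is square-free, a labeling $\mathbf{k}$ is the same datum as a finitely supported assignment $p\mapsto S_p$, and because $K_l$ is the $\operatorname{lcm}$ of the labels at $l$ one gets $\sum_l\deg(K_l)=\sum_p\deg(p)\,|V(S_p)|$ and $\sum_{\widehat{ij}}\omega(k_{\widehat{ij}})=\sum_p|S_p|$. Hence the summand factorizes as $\prod_p(-1)^{|S_p|}t^{\deg(p)\,|V(S_p)|}$, while $\mathbf{k}\in M(n)$ becomes the per-vertex bounds $\sum_{p:\,l\in V(S_p)}\deg(p)\le n_l$.

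Second, I would pin down the order in $t$ of each configuration. Summing over the choice of an irreducible of a given degree $\delta$ costs a factor $\varphi_\delta=\tfrac{1}{\delta}t^{-\delta}+O(t^{-(\delta-1)})$ by Lemma \ref{anzirred}, so an active $p$ with support $S_p$ contributes net order $\deg(p)\bigl(|V(S_p)|-1\bigr)\ge 1$ (each edge covers two vertices, each irreducible has degree at least one). Collecting the configurations of total net order at most $2$ leaves only finitely many shapes: the empty labeling (order $0$, the leading $1$); one linear irreducible on a single edge (order $1$); and at order $2$ the five types (A) one quadratic irreducible on a single edge, (B) one linear irreducible whose support covers three vertices (the three $2$-edge paths and the one triangle on a vertex-triple), (C1) two distinct linear irreducibles on the same edge, (C2a) two distinct linear irreducibles on two edges sharing a vertex, and (C2b) two distinct linear irreducibles on two disjoint edges. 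For each type I would record its contribution as (number of vertex-shapes)$\times$(number of admissible irreducible choices)$\times$(signed $t$-power), reading the sign off $\prod(-1)^{\omega}$ and the power off $\sum_l\deg(K_l)$; the order-$1$ tally reproduces $-\tfrac{N(N-1)}{2}t$.

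The delicate part, and the source of the $N_1$-dependence, is the interplay between the bounds $\deg(K_l)\le n_l$ and the shape of each configuration. A label of degree $2$ forces its endpoints to have $n_l\ge 2$, so types (A) and (C1) only range over the $\binom{N-N_1}{2}$ edges joining two vertices of degree at least two; a short count then shows these two contributions cancel \emph{exactly}, since monic square-free quadratics split evenly between $\omega$ odd and $\omega$ even ($\varphi_2=\binom{t^{-1}}{2}$). The genuinely subtle distinction is between (B) and (C2a): both have a path on three vertices as support, but in (B) the common center carries $K=p$ of degree $1$ and is unconstrained, whereas in (C2a) the center carries $K=p_1p_2$ of degree $2$ and hence demands $n_v\ge 2$, confining it to the $N-N_1$ vertices of degree at least two. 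Separating these cases, and counting the ordered-versus-unordered assignments of distinct linear polynomials to distinct edges correctly, is where the bookkeeping must be done carefully; this is the main obstacle. Adding the surviving contributions $2\binom{N}{3}$ from (B), $(N-N_1)\binom{N-1}{2}$ from (C2a), and $\tfrac{1}{2}\binom{N}{2}\binom{N-2}{2}$ from (C2b), then factoring out $(N-1)(N-2)$ and placing the bracket over the common denominator $24$, yields $\tfrac{1}{24}(N-1)(N-2)(3N^2+11N-12N_1)$, as claimed.
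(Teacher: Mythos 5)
Your reorganization of the sum from Theorem \ref{graph} by irreducible factors (encoding a labeling $\mathbf{k}$ as the assignment $p\mapsto S_p$) is valid, and it is a genuinely different bookkeeping from the paper, which instead sorts by the degree vector $\mathbf{g}$ of the labels and works edge by edge. Your identity $\sum_l\deg(K_l)=\sum_p\deg(p)|V(S_p)|$, the net-order invariant $\deg(p)(|V(S_p)|-1)$, and the enumeration of shapes of net order $\le 2$ are all correct; I checked that your tally $2\binom{N}{3}+(N-N_1)\binom{N-1}{2}+\tfrac12\binom{N}{2}\binom{N-2}{2}$ equals the paper's $\binom{N-1}{2}\bigl(2(N-N_1)+N_1\bigr)+3\binom{N}{4}-\binom{N}{3}$ and yields $\tfrac{1}{24}(N-1)(N-2)(3N^2+11N-12N_1)$. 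The exact cancellation of your types (A) and (C1) via $\varphi_2=\binom{t^{-1}}{2}$ is in fact stated more precisely than in the paper (which compares irreducible with reducible quadratics; the correct comparison is with square-free reducible quadratics), and the treatment of the $N_1$-dependence through the per-vertex bound at the centre of an angle matches the paper's case distinction.

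There is, however, a genuine gap: you assert that only the finitely many shapes of net order $\le 2$ matter, but you never prove that the remaining configurations contribute $O(t^3)$ \emph{in total}. This is not a formality --- there are infinitely many configurations of net order $\ge 3$ (arbitrarily many active irreducibles, of arbitrary degrees, on arbitrary edge sets), and the paper spends well over half of its proof on exactly this point, via an induction on the number of edges with a six-way case analysis. A per-configuration bound of order $t^{\text{net order}}$ follows from your setup, but one must sum it. The good news is that your framework admits a cleaner tail estimate than the paper's: dropping signs and the degree constraints, a configuration with active irreducibles of degrees $\delta_1,\dots,\delta_r$ on edge sets $S_1,\dots,S_r$ is majorized by $\prod_i\varphi_{\delta_i}t^{\delta_i|V(S_i)|}\le\prod_i\tfrac{1}{\delta_i}t^{\delta_i(|V(S_i)|-1)}$, and the single-irreducible generating sum $a:=\sum_{\emptyset\neq S\subseteq\mathcal E}\sum_{\delta\ge1}\tfrac{1}{\delta}t^{\delta(|V(S)|-1)}$ is $O(t)$ (the $|V(S)|=2$ terms give $-E\log(1-t)$, the rest give $O(t^2)$); splitting the tail into configurations with one factor of net order $\ge 3$, one factor of net order $\ge2$ plus at least one more, or at least three factors, each bounded by $O(t^3)$, $O(t^2)\cdot a/(1-a)$, and $a^3/(1-a)$ respectively, closes the argument. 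You should add this (or an equivalent) summability step; without it the claim that everything else is $O(t^3)$ is unsupported.
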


\begin{proof}
Let $G$ be a graph with $N$ vertices, which are pairwisely connected by an edge, i.e. the number of edges $E$ is $\frac{N(N-1)}{2}$. Furthermore, let $|G(n)|$ be the number of $N$-tuples of monic pairwise coprime polynomials over $\mathbb F$ of degrees $n_1,\ldots,n_N$. Using Theorem \ref{3} with $\Gamma=G$, one gets that the probability of pairwise coprimeness is equal to
$$|G(n)|\cdot t^{n_1+\cdots+n_N}=\sum_{\textbf{k}\in M(n)}\prod_{\widehat{ij}\in \mathcal{E}}(-1)^{\omega(k_{\widehat{ij}})}\prod_{l=1}^N t^{\deg(K_l)}.$$
Thus, to show the stated formula, we have to compute the series expansion in $t$ of this term till the coefficient of $t^2$.


We first sort the elements of $M(n)$ with respect to the degrees of the entries of the vector $\textbf{k}=(k_1,...,k_E)$.
To this end, for each vector of non-negative integers $\mathbf{g}:=(g_1,...,g_E)$ define
$M(n,\mathbf{g}):=\{\mathbf{k}\in M(n)\  |\ \deg(k_{m})=g_m\ for\ 1\leq m\leq E\}$. Let $A$ be the set of all $\mathbf{g}$ with $M(n,\mathbf{g})\neq \emptyset$. Note that the degree bounds for $M(n)$ ensure that $A$ is finite. 
One achieves:
\begin{eqnarray*}
|G(n)|=t^{-(n_1+...+n_N)}\sum_{\mathbf{g}\in A}\  \sum_{\mathbf{k}\in M(n,\mathbf{g})}\ \prod_{\widehat{ij}\in\mathcal{E}}(-1)^{\omega(k_{\widehat{ij}})}\prod_{l=1}^N t^{\deg(K_l)}.
\end{eqnarray*}
Starting with small values for the entries of $\mathbf{g}$, the first summands are computed. 
For $\mathbf{g}=(0,\ldots,0)$, i.e. $\mathbf{k}=(1,\ldots,1)$, one gets the summand 1 because of $\omega(1)=0$ and $K_l=1$ for $l=1,\ldots,N$. If $g_{m_0}=1$ for exactly one $1\leq m_0\leq E$ and $g_m=0$ for $m\neq m_0$, there are $|\mathbb F|=1/t$ possibilities for the linear polynomial $k_{m_0}$ and $E$ possibilities for the choice of $m_0$. Moreover, $\omega(k_{m_0})=1$, so that these summands have negative sign. 
As $k_{m_0}$ is relevant for exactly those $K_l$ for which its associated edge is terminating at $l$, there are exactly two $K_l$ which are of degree 1. Hence, the resulting sum of these terms is equal to $-E\cdot\frac{1}{t}\cdot t^{2}=-E\cdot t.$\\
Note that for all summands computed so far, every $\mathbf{k}$ lies in $M(n,\mathbf{g})$ since $\deg(K_l)\leq 1$ in all considered cases. Next look at the summands whose sum of the entries of $\mathbf{g}$ is equal to $2$. 
The absolute value of each summand with $g_{m_0}=2$ for exactly one $1\leq m_0\leq E$ and $g_m=0$ for $m\neq m_0$ is equal to $t^4$. They have negative sign if $k_{m_0}$ is irreducible and positive sign otherwise. Since the number of irreducible monic polynomials of degree $2$ is equal to the number of reducible monic polynomials of degree 2 (see e.g. Remark 1 of \cite{hjL}), it follows that these summands add up to zero. Hence, in this case, it does not matter whether $\mathbf{k}$ lies in $M(n,\mathbf{g})$ or not since this depends only on $m_0$ and not on $k_{m_0}$ itself.

Now consider the summands for which two entries of $\mathbf{g}$ are equal to one, and the other entries are equal to zero. 
If the corresponding edges of the nonzero entries have a vertex $l$ in common, the summand has the value 
\begin{align}\label{angle}
\sum_{\stackrel{\text{\scriptsize $k_1,k_2$ monic}}{\deg(k_m)=1}}t^{2+\deg(\operatorname{lcm}(k_{1},k_{2}))}&=
\sum_{\stackrel{\text{\scriptsize $k_1=k_2$ monic}}{\deg(k_m)=1}}t^3+\sum_{\stackrel{\text{\scriptsize $k_1\neq k_2$ monic}}{\deg(k_m)=1}}t^4=\nonumber\\
&=\frac{1}{t}\cdot t^3+\frac{1}{t}\cdot \left(\frac{1}{t}-1\right)\cdot t^4=2t^2-t^3
\end{align}
if $n_l\geq 2$ and $t^2$ if $n_l=1$ since, in this case, $k_1\neq k_2$ implies $\deg(K_l)=2>n_l$ and thus $\mathbf{k}\notin M(n,\mathbf{g})$ and the second sum of the preceding computation vanishes. For such an "angle", there are $N\cdot\binom{N-1}{2}$ possibilities, $N$ for the apex and $\binom{N-1}{2}$ for the two sides of the angle.

If those two edges are isolated, the summand has the value 
$$\sum_{\stackrel{\text{\scriptsize $k_1,k_2$ monic}}{\deg(k_m)=1}}t^4=t^2.$$
For this case, there are $\binom{N}{4}$ possibilities to choose the $4$ involved vertices and $3$ possibilities to connect two of them, pairwisely.

In summary, all summands whose sum of the entries of $\mathbf{g}$ is at most two contribute the value 
\begin{align}\label{2}
&1-\frac{N(N-1)}{2}\cdot t+\left(\binom{N-1}{2}(2(N-N_1)+N_1)+3\cdot\binom{N}{4}\right)\cdot t^2+O(t^3)=\nonumber\\
&1-\frac{N(N-1)}{2}\cdot t+\frac{(N-1)(N-2)}{8}(N^2+5N-4N_1)\cdot t^2+O(t^3).
\end{align}

If three entries of $\mathbf{g}$ are equal to $1$, where the corresponding edges form a triangle, and the other entries are equal to $0$, one gets
$$-\frac{1}{t}\cdot t^3-\frac{3}{t}\cdot\left(\frac{1}{t}-1\right)\cdot t^5-\frac{1}{t}\cdot\left(\frac{1}{t}-1\right)\cdot\left(\frac{1}{t}-2\right)\cdot t^6=-t^2+O(t^3).$$
Here the first summand of the left hand side of the equation 
gives the probability for the case that three, the second summand that two and the third summand that none of the three entries of $\mathbf{k}$ that contain a linear polynomial are identical. Moreover, there are $\binom{N}{3}$ possibilities for such a triangle. Adding these summands to \eqref{2}, one gets
\begin{align*}
1-\frac{N(N-1)}{2}\cdot t+\frac{(N-1)(N-2)}{8}(N^2+5N-4N_1-4N/3)\cdot t^2+O(t^3)=\\
1-\frac{N(N-1)}{2}\cdot t+\frac{(N-1)(N-2)}{24}(3N^2+11N-12N_1)\cdot t^2+O(t^3).
\end{align*}

It remains to show that the summands for all other values of $\mathbf{g}$ are $O(t^3)$, i.e. that
$$R(\mathbf{g}):=\sum_{\mathbf{k}\in M(n,\mathbf{g})}\prod_{l=1}^N t^{\deg(K_l)}=O(t^{3})$$
for every fixed $\mathbf{g}$ for which the sum of the entries of $\mathbf{g}$ is at least three and for which it does not hold that three entries of $\mathbf{g}$ are equal to $1$, where the corresponding edges form a triangle, and the other entries are equal to $0$.

To this end, define $\Gamma$ as any subgraph of $G$ and $E$ as the number of edges of $\Gamma$ and show the above estimation for $R(g)$ per induction with respect to $E$.

For $E=1$, note that $\mathbf{g}$ and $\mathbf{k}=k_{\widehat{12}}$ are scalar. Moreover, $K_{1}=K_{2}=k_{\widehat{12}}$. Therefore, $R(\mathbf{g})=0$ if $\mathbf{g}>\min(n_{1},n_{2})$ and otherwise
$$R(\mathbf{g})\leq\sum_{\mathbf{k}\ \text{monic},\ \deg(\mathbf{k})=\mathbf{g}}t^{2\deg(\mathbf{k})}=\left(\frac{1}{t}\right)^{\mathbf{g}}\cdot t^{2\mathbf{g}}=t^{\mathbf{g}}=O(t^{3})\ \text{for}\ \mathbf{g}\geq 3.$$
This computation starts with an inequality since the condition that $\mathbf{k}$ has to be square-free is dropped. The first equality follows from the fact that there are $(1/t)^{\mathbf{g}}$ monic polynomials of degree $\mathbf{g}$.

Next, we take the step from $E-1$ to E.
To this end, choose one of the smallest entries of $\mathbf{g}$ and denote it without loss of generality by $g_E$. Then, the edge with which $k_E$ is associated -- in the following denoted by $\widehat{ij}$ -- is taken away form the original graph and thus a graph with $E-1$ edges is achieved.
In the following, the index $(E-1)$ above an expression means that it belongs to a graph with $E-1$ edges; in the same way we use the index $(E)$. Similarly, $\mathbf{k}^{(E-1)}$ and $\mathbf{g}^{(E-1)}$ should denote the vectors consisting of the first $E-1$ entries of $\mathbf{k}$ and $\mathbf{g}$, respectively.
The degrees of the $K_l$ can never increase, when taking an edge away. Therefore, $\mathbf{k}\in M(n,\mathbf{g})$ implies $\mathbf{k}^{(E-1)}\in M^{(E-1)}(n,\mathbf{g}^{(E-1)})$.
Next we set
$$W_i:=\gcd(K^{(E-1)}_{i}, k_{E})\ \ \ \text{and}\ \ \ W_j:=\gcd(K^{(E-1)}_{j}, k_{E}).$$
Moreover, let
\begin{eqnarray*}
B_{v_i,v_j}^{(E-1)}:=\{\mathbf{k}^{(E-1)}\in M^{(E-1)}(n,\mathbf{g}^{(E-1)})\ |\ \deg(K^{(E-1)}_{i})=v_i,\  \deg(K^{(E-1)}_{j})=v_j\},\\
B_{v_i,v_j,w_i,w_j}^{(E)}:=\{\mathbf{k}\in M^{(E)}(n,\mathbf{g})\ |\ \mathbf{k}^{(E-1)}\in B_{v_i,v_j}^{(E-1)},\ \deg(W_i)=w_i,\ \deg(W_j)=w_j\}.
\end{eqnarray*}
It follows
$$R(\mathbf{g})\leq \sum_{v_i,v_j,w_i,w_j\leq\max(n_{i}, n_{j})}\ \sum_{\mathbf{k}\in B_{v_i,v_j,w_i,w_j}^{(E)}} \prod_{l=1}^N t^{\deg(K^{(E)}_l)}.$$
The number of summands in the first sum is finite and thus one only has to show that for any fixed $v_i,v_j,w_i,w_j$ the following is true:
$$\sum_{\mathbf{k}\in B_{v_i,v_j,w_i,w_j}^{(E)}} \prod_{l=1}^N t^{\deg(K^{(E)}_l)}=O(t^{3}).$$
To do this one computes
$$K_{i}^{(E)}=\operatorname{lcm}(K_{i}^{(E-1)}, k_{E})=\frac{K_{i}^{(E-1)}\cdot k_{E}}{W_i}.$$
Consequently, one has $\deg(K_{i}^{(E)})=\deg(K^{(E-1)}_{i})+g_E-w_i$ and $\deg(K_{j}^{(E)})=\deg(K^{(E-1)}_{j})+g_E-w_j$, analogously.
For $l\notin\{i,j\}$ it holds $K_l^{(E)}=K_l^{(E-1)}$ because nothing changes at the associated vertices. It follows:
\begin{eqnarray}\label{eq:final}
\sum_{\mathbf{k}\in B_{v_i,v_j,w_i,w_j}^{(E)}} \prod_{l=1}^N t^{\deg(K^{(E)}_l)}=\sum_{\mathbf{k}\in B_{v_i,v_j,w_i,w_j}^{(E)}} \prod_{l=1}^N t^{\deg(K^{(E-1)}_l)}\cdot t^{2g_E-w_i-w_j}.
\end{eqnarray}
Here, the product $\prod_{l=1}^N t^{\deg(K^{(E-1)}_l)}$ is independent of $k_{E}$ and $t^{2g_E-w_i-w_j}$ is independent of $\mathbf{k}$. 

Next, for $\mathbf{k}^{(E-1)}\in B_{v_i,v_j}^{(E-1)}$, an upper bound for the number of polynomials $k_{E}$ such that $\mathbf{k}\in B_{v_i,v_j,w_i,w_j}^{(E)}$ should be determined.
$\mathbf{k}^{(E-1)}$ uniquely determines $K^{(E-1)}_{i}$ and since $W_i$ is a divisor of $K^{(E-1)}_{i}$ of degree $w_i$, there are only finitely many possibilities for $W_i$. Define $C$ as this number of possibilities for $W_i$. One knows that $k_{E}$ has to be a multiple of $W_i$ of degree $g_E$. Thus, for each $W_i$ there are at most $t^{w_i-g_E}$ possibilities for $k_{E}$.
Using this and the fact that the product in (\ref{eq:final}) is independent of $k_{E}$, it follows for the expression in (\ref{eq:final}):
\begin{align*}
\sum_{\mathbf{k}\in B_{v_i,v_j,w_i,w_j}^{(E)}} \prod_{l=1}^N t^{\deg(K^{(E)}_l)}
&\leq  t^{2g_E-w_i-w_j}\cdot C t^{w_i-g_E} \sum_{\mathbf{k}^{(E-1)}\in B_{v_i,v_j}^{(E-1)}} \prod_{l=1}^N t^{\deg(K^{(E-1)}_l)}\\
&= C t^{g_E-w_j}\sum_{\mathbf{k}^{(E-1)}\in B_{v_i,v_j}^{(E-1)}} \prod_{l=1}^N t^{\deg(K^{(E-1)}_l)}\\
&\leq  C\cdot R(\mathbf{g}^{(E-1)})
\end{align*}
because $w_j\leq g_E$ since $W_j\mid k_{E}$.
Now we distinguish several cases:

\underline{Case 1}: The sum of the entries of $\mathbf{g}^{(E-1)}$ is at least three and it does not hold that three entries of $\mathbf{g}^{(E-1)}$ are equal to $1$, where the corresponding edges form a triangle, and the other entries are equal to $0$.\\
%
%
%
%
%
%
Then, $R(\mathbf{g}^{(E-1)})$ is $O(t^{3})$ per induction and we are done.

\vspace{1mm}
\underline{Case 2}: $\mathbf{g}^{(E-1)}$ has a component that is equal to zero.\\
Here, $g_E$ must be zero since it was chosen to be one of the smallest entries. 
Thus, $\Gamma^{(E-1)}$ and $\Gamma^{(E)}$ could be treated as being identical and hence, the conditions of case 1 are fulfilled.
Consequently, we are done, too. 
\vspace{1mm}

\underline{Case 3}: $\mathbf{g^{(E-1)}}=(1,1,1)$ and $\Gamma^{(E-1)}$ is a triangle.\\
This case can be avoided: It holds $\mathbf{g^{(E)}}=(1,1,1,1)$ since $\mathbf{g^{(E)}}=(1,1,1,0)$ would mean that $\Gamma^{(E)}$ is a triangle, too, because an edge $\widehat{ij}$ with labelling $k_{\widehat{ij}}=1$ could be treated like it would not exist. Therefore, one of the vertices of the triangle has an third edge which connects it with the additional vertex. Since all entries of $\mathbf{g^{(E)}}$ are identical, one can take away an arbitrary edge in our process of induction. If one takes away one of the edges which form the triangle, the resulting $\Gamma^{(E-1)}$ is not a triangle any more.

It remains to consider all possible cases for which the sum of the entries of $\mathbf{g}^{(E-1)}$ is smaller than three but the sum of the entries of $\mathbf{g}^{(E)}$ is at least three and it does not hold that three entries of $\mathbf{g}^{(E)}$ are equal to $1$, where the corresponding edges form a triangle, and the other entries are equal to $0$.
First, one considers $\mathbf{g}^{(E-1)}=(1,1)$ (case 4) and then $\mathbf{g}^{(E-1)}=2$ (cases 5 and 6).


\vspace{1mm}
\underline{Case 4}: $\mathbf{g}^{(E)}=(1,1,1)$ and $\Gamma^{(E)}$ is no triangle.

Case 4a: $\Gamma^{(E)}$ consists of three isolated edges:
$$R(\mathbf{g})\leq\left(\frac{1}{t}\right)^3\cdot t^6=t^3=O(t^3).$$

Case 4b: $\Gamma^{(E)}$ consists of an isolated edge and an angle (see \eqref{angle}):
$$R(\mathbf{g})\leq\frac{1}{t}\cdot t^2\cdot(2t^2-t^3)=O(t^3).$$

Case 4c: $\Gamma^{(E)}$ consists of three edges forming one line:
$$R(\mathbf{g})\leq\frac{1}{t}\cdot t^4+\frac{2}{t}\left(\frac{1}{t}-1\right)\cdot t^5+\frac{1}{t}\left(\frac{1}{t}-1\right)^2\cdot t^6=O(t^3).$$
The first summand covers the case that all linear polynomials in $\mathbf{k}$ are identical, the second summand the case that the polynomial of the edge in the middle coincides with one of the others and the third polynomial is different and the third summand the case that the polynomial in the middle is different from the other two polynomials.

Case 4d: $\Gamma^{(E)}$ consists of three edges that meet at one vertex:
$$R(\mathbf{g})\leq\frac{1}{t}\cdot t^4+\frac{3}{t}\left(\frac{1}{t}-1\right)\cdot t^5+\frac{1}{t}\left(\frac{1}{t}-1\right)\left(\frac{1}{t}-2\right)\cdot t^6=O(t^3).$$
The first summand covers the case that all linear polynomials in $\mathbf{k}$ are identical, the second summand the case that exactly two of them are identical and the third summand the case that all polynomials are different.

\vspace{1mm}
\underline{Case 5}: $\mathbf{g}^{(E)}=(2,1)$.\\
Since we are considering upper bounds for $R(\mathbf{g})$ in the following, we can drop the condition that the quadratic polynomials have to be square-free.

Case 5a: $\Gamma^{(E)}$ consists of two isolated edges:
$$R(\mathbf{g})\leq\left(\frac{1}{t}\right)^3\cdot t^6=O(t^3).$$

Case 5b: $\Gamma^{(E)}$ consists of an angle:
$$R(\mathbf{g})\leq\frac{1}{t}\left(\frac{1}{t}-1\right)\cdot t^5+\frac{1}{t^3}\cdot t^6=O(t^3).$$
The first summand covers the case that the linear polynomial divides the quadratic polynomial, the second summand the other case. 

\vspace{1mm}
\underline{Case 6}: $\mathbf{g}^{(E)}=(2,2)$.

Case 6a: $G$ consists of two isolated edges:
$$R(\mathbf{g})\leq\left(\frac{1}{t}\right)^4\cdot t^8=O(t^3).$$

Case 6b: $G$ consists of an angle:
$$R(\mathbf{g})\leq\frac{1}{t^2}\cdot t^6+\frac{1}{t^4}\cdot t^7=O(t^3).$$
The first summand covers the case that the two quadratic polynomials are identical, the second summand the other case.

It follows $R(\mathbf{g})=O(t^3)$ for every fixed $\mathbf{g}$ for which the sum of the entries of $\mathbf{g}$ is at least three and for which it does not hold that three entries of $\mathbf{g}$ are equal to $1$, where the corresponding edges form a triangle, and the other entries are equal to $0$. Consequently,
\begin{align*}
&|G(n)|=t^{-(n_1+\hdots+n_N)}\cdot\\
&\cdot \left(1-\frac{N(N-1)}{2}\cdot t+\frac{1}{24}(N-1)(N-2)(3N^2+11N-12N_1)\cdot t^2+O(t^{3})\right).
\end{align*}
\end{proof}
So far, we used the uniform probability distribution and fixed the degrees of the considered polynomials.
In the following, this result should be compared with the natural density of pairwise coprime polynomials. As mentioned in the introduction the natural density of pairwise coprime polynomials has already been computed in \cite{guo2} but the proof in that paper is not correct. In Remark \ref{fix}, we will show how to fix this problem.
In this section of our paper, we just cite the result and use it for comparison with the formula for the uniform probability distribution.

\begin{theorem}\label{pwe}\cite{guo2}\ \\
The natural density of $N$ polynomials $d_1,\hdots,d_N\in\mathbb F[z]$ to be pairwise coprime is equal to 
$$\prod_{j=1}^{\infty}\left((1-t^{j})^{N-1}(1+(N-1)t^{j})\right)^{\varphi_j}.$$
 \end{theorem}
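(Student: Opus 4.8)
The plan is to treat pairwise coprimeness as a condition that localizes at each monic irreducible polynomial and to compute the density as a (carefully justified) product of local factors. First I would observe that a tuple $(d_1,\dots,d_N)$ is pairwise coprime if and only if no monic irreducible $p$ divides two of the $d_i$; the failure of this condition at a single prime $p$ of degree $j$ depends only on the residues $d_i\bmod p\in\mathbb F[z]/(p)$. Counting the tuples of residues in which at most one coordinate vanishes and dividing by $|\mathbb F[z]/(p)|^N=t^{-jN}$ yields the local factor
$$(1-t^j)^N+N\,t^j(1-t^j)^{N-1}=(1-t^j)^{N-1}\bigl(1+(N-1)t^j\bigr),$$
the first term counting the residue tuples with all coordinates nonzero and the second those with exactly one vanishing coordinate (this is where a direct count, or Lemma \ref{ine}, enters).

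Second, I would show that the density of the truncated event $E_{\le D}$, ``no prime of degree $\le D$ divides two components'', equals the finite product $\prod_{j=1}^D[(1-t^j)^{N-1}(1+(N-1)t^j)]^{\varphi_j}$. The key is an equidistribution property of the enumeration of Definition \ref{nd}: writing $m_D$ for the product of all monic irreducibles of degree $\le D$ and $d=\deg m_D$, each block of $t^{-d}$ consecutive polynomials $f_{st^{-d}},\dots,f_{(s+1)t^{-d}-1}$ runs exactly once through every residue class modulo $m_D$. Indeed, for $k=s\,t^{-d}+r$ with $0\le r<t^{-d}$ one has $f_k=f_r+z^d f_s$, and as $r$ ranges over $0,\dots,t^{-d}-1$ the part $f_r$ runs through all polynomials of degree $<d$, a complete residue system mod $m_D$, while $z^d f_s\bmod m_D$ only adds a fixed shift. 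Hence for $n+1=q\,t^{-d}+r'$ every residue class is represented $q$ or $q+1$ times among $f_0,\dots,f_n$, so the joint distribution of $(d_1\bmod m_D,\dots,d_N\bmod m_D)$ on $M_n$ converges to the uniform one as $n\to\infty$. Since $E_{\le D}$ is a union of residue classes mod $m_D$, its density exists and, by the Chinese Remainder Theorem, factors over the primes dividing $m_D$ into the product above, using $\varphi_j$ irreducibles of degree $j$ (Lemma \ref{anzirred}).

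The main obstacle --- and precisely the point at which the argument of \cite{guo2} must be repaired --- is passing from the finite truncation to the infinite product, since natural density is only finitely additive and one may not simply intersect over all primes at once. I would control the tail uniformly in $n$. Let $B_D$ be the set of tuples in which some prime of degree $>D$ divides two components. On $M_n$ only finitely many primes can occur (a prime dividing an entry has degree at most that entry's degree), and by the same equidistribution the density of $\{p\mid d_i,\ p\mid d_{i'}\}$ is at most $t^{2j}$ for a prime of degree $j$; summing over pairs and over $j>D$ gives
$$\limsup_{n\to\infty}\frac{|B_D\cap M_n|}{|M_n|}\ \le\ \binom{N}{2}\sum_{j>D}\varphi_j\,t^{2j}\ \le\ C\,t^{D+1},$$
a bound uniform in $n$. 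Since $E\subseteq E_{\le D}$ and $E\supseteq E_{\le D}\setminus B_D$, sandwiching yields
$$\prod_{j=1}^{D}\bigl[(1-t^j)^{N-1}(1+(N-1)t^j)\bigr]^{\varphi_j}-C\,t^{D+1}\ \le\ \liminf_{n}\frac{|E\cap M_n|}{|M_n|}\ \le\ \limsup_{n}\frac{|E\cap M_n|}{|M_n|}\ \le\ \prod_{j=1}^{D}\bigl[(1-t^j)^{N-1}(1+(N-1)t^j)\bigr]^{\varphi_j}.$$
Letting $D\to\infty$ collapses the sandwich onto the claimed infinite product, establishing both the existence of the density and its value; the convergence of that product, equivalently $\sum_j\varphi_j t^{2j}<\infty$, is routine from Lemma \ref{anzirred}.
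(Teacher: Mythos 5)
Your overall architecture---local factors at each monic irreducible obtained from equidistribution of residues under the enumeration of Definition \ref{nd}, independence across primes via the Chinese Remainder Theorem, and a sandwich between the truncated event $E_{\le D}$ and $E_{\le D}\setminus B_D$---is sound, and it is essentially the repaired argument the paper itself uses: the identity $\lim_n|E_P\cap M_n|/|M_n|=\prod_{f\in P}W_f$ is derived in the proof of Theorem \ref{end} by exactly your $b$-to-one projection argument, and the tail is controlled in Remark \ref{fix}. Your local factor $(1-t^j)^N+Nt^j(1-t^j)^{N-1}=(1-t^j)^{N-1}(1+(N-1)t^j)$ is correct.

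The gap is in the justification of your uniform tail bound, and it sits at precisely the point where the published proof in \cite{guo2} breaks down. You assert that on $M_n$ ``only finitely many primes can occur (a prime dividing an entry has degree at most that entry's degree)'' and that each prime of degree $j$ contributes at most $t^{2j}$. This ignores the zero polynomial $f_0\equiv 0$, which lies in $\{f_0,\dots,f_n\}$ for every $n$ and is divisible by \emph{every} irreducible. For any prime $f$ of degree $j$ with $t^{-j}>n+1$, the set $H_f\cap M_n$ still contains all tuples with two zero components, so $|H_f\cap M_n|/|M_n|$ is of order $(n+1)^{-2}$, which exceeds $t^{2j}$; since there are infinitely many such primes, the sum $\sum_{\deg f>D}|H_f\cap M_n|/|M_n|$ diverges for each fixed $n$, and the claimed inequality $|B_D\cap M_n|/|M_n|\le\binom{N}{2}\sum_{j>D}\varphi_j t^{2j}$ is not established uniformly in $n$. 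If instead you only assert the bound for the limsup, you are interchanging a limsup with an infinite series, which is exactly the unjustified step of \cite{guo2} identified by Micheli and Schnyder. The repair is the one carried out in Remark \ref{fix}: since $E=E_{\le D}\setminus B_D$, it suffices to bound $|B_D\cap E_{\le D}\cap M_n|$, and on $E_{\le D}$ no tuple has two zero components; hence any witnessing prime divides at least one \emph{nonzero} entry, forcing $t^{-\deg f}\le n+1$, and for such primes the finite-$n$ proportion (not merely the limiting density---a second conflation in your write-up) of $\{f\mid d_i,\ f\mid d_{i'}\}$ is at most a constant times $t^{2\deg f}$ by your own counting of residue classes. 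With that restriction your summation over $j>D$ converges to a bound independent of $n$ and the sandwich argument goes through.
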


\begin{corollary}\label{gem}\ \\
The natural density of $N$ polynomials $d_1,\hdots,d_N\in\mathbb F[z]$ to be pairwise coprime is equal to 
$$1-\binom{N}{2}t+\frac{1}{24}(N-1)(N-2)(3N^2+11N)t^{2}+O(t^{3}).$$
\end{corollary}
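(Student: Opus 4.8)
The plan is to read off the asymptotic expansion directly from the closed product formula in Theorem~\ref{pwe} by passing to logarithms. Writing $D$ for the natural density, I would start from
$$\log D = \sum_{j=1}^\infty \varphi_j\Big[(N-1)\log(1-t^j) + \log\big(1+(N-1)t^j\big)\Big]$$
and expand the bracketed expression as a power series in $u := t^j$. The crucial preliminary observation is that the coefficient of $u^1$ cancels, since $(N-1)\log(1-u)$ contributes $-(N-1)u$ while $\log(1+(N-1)u)$ contributes $+(N-1)u$. Hence the bracket begins at order $u^2$, and a short expansion of the two logarithms gives
$$(N-1)\log(1-u)+\log(1+(N-1)u) = -\frac{N(N-1)}{2}u^2 + \frac{N(N-1)(N-2)}{3}u^3 + O(u^4).$$

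Next I would perform the bookkeeping that decides which summands survive modulo $t^3$. Substituting $u=t^j$ and multiplying by $\varphi_j$, Lemma~\ref{anzirred} gives $\varphi_j = \tfrac{1}{j}t^{-j}+O(t^{-(j-1)})$, so a bracket term of order $t^{kj}$ contributes to $\log D$ at order $t^{(k-1)j}$. Demanding $(k-1)j\leq 2$ isolates exactly three surviving contributions: the $u^2$-term at $j=1$ and at $j=2$, together with the $u^3$-term at $j=1$. Here I would use that $\varphi_1 = t^{-1}$ is \emph{exact} (all monic linear polynomials are irreducible), so it carries no constant correction that could contaminate the $t^2$ coefficient, and $\varphi_2 = \tfrac12(t^{-2}-t^{-1})$, whose subleading part only affects order $t^3$. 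Collecting these three terms yields
$$\log D = -\frac{N(N-1)}{2}\,t + \frac{N(N-1)(4N-11)}{12}\,t^2 + O(t^3).$$

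Finally I would exponentiate via $\exp(c_1 t + c_2 t^2 + O(t^3)) = 1 + c_1 t + (c_2 + \tfrac12 c_1^2)t^2 + O(t^3)$ and simplify. The linear term reproduces $-\binom{N}{2}t$; combining $c_2 = \tfrac{N(N-1)(4N-11)}{12}$ with $\tfrac12 c_1^2 = \tfrac{N^2(N-1)^2}{8}$ over the common denominator $24$ gives the $t^2$-coefficient $\tfrac{1}{24}N(N-1)(3N^2+5N-22)$, and the elementary factorization $3N^2+5N-22=(N-2)(3N+11)$ rewrites this as the claimed $\tfrac{1}{24}(N-1)(N-2)(3N^2+11N)$.

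I expect the only genuine pitfall to lie in the term-counting of the second step: it is easy to overlook that the \emph{cubic} term of the inner expansion feeds into the $t^2$-coefficient through the factor $\varphi_1\sim t^{-1}$, and equally easy to mishandle how much precision of $\varphi_1$ and $\varphi_2$ is actually needed. Once the finite list of surviving pairs $(j,k)$ is pinned down, the rest is mechanical, and the concluding polynomial identity $N(N-1)(3N^2+5N-22)=(N-1)(N-2)(3N^2+11N)$ is a routine verification.
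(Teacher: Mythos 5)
Your proposal is correct, and it reaches the stated coefficient by a route that differs from the paper's only in how the expansion of the product from Theorem \ref{pwe} is organized. The paper expands each factor $F_j^{\varphi_j}=\bigl(1-\binom{N}{2}t^{2j}+\tfrac13N(N-1)(N-2)t^{3j}+O(t^{4j})\bigr)^{\varphi_j}$ directly via the generalized binomial theorem, which forces it to track the cross term $\binom{\varphi_1}{2}\binom{N}{2}^2t^4$ by hand; you instead pass to $\log D=\sum_j\varphi_j\log F_j$, where that same contribution reappears automatically as the $\tfrac12c_1^2$ term upon exponentiating. Both arguments hinge on the identical bookkeeping — only the pairs $(j,k)=(1,2),(1,3),(2,2)$ survive modulo $t^3$ — and on the same two facts you single out: the order-$u$ cancellation inside each factor, and the exactness of $\varphi_1=t^{-1}$ (so no $O(1)$ correction from Lemma \ref{anzirred} contaminates the $t^2$ coefficient), with the subleading part of $\varphi_2=\tfrac12(t^{-2}-t^{-1})$ only entering at order $t^3$. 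Your arithmetic checks out at every stage: the bracket expands to $-\tfrac{N(N-1)}{2}u^2+\tfrac{N(N-1)(N-2)}{3}u^3+O(u^4)$, the three surviving terms give $c_2=\tfrac{N(N-1)(4N-11)}{12}$, and $c_2+\tfrac12c_1^2=\tfrac{1}{24}N(N-1)(3N^2+5N-22)=\tfrac{1}{24}(N-1)(N-2)(3N^2+11N)$ since $3N^2+5N-22=(N-2)(3N+11)$. What the logarithmic version buys is transparency (no generalized binomial coefficients, and the tail $\sum_{j\ge 3}\varphi_j\log F_j=O(\sum_{j\ge3}t^j)=O(t^3)$ is controlled in one line); what the paper's direct expansion buys is that it never needs to justify taking the logarithm of the infinite product, though that justification is immediate here since $0<F_j\le 1$ and $\sum_j\varphi_j|\log F_j|<\infty$.
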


\begin{proof}
One has to show 
\begin{align*}
&\prod_{j=1}^{\infty}((1-t^{j})^{N-1}(1+(N-1)t^{j}))^{\varphi_j}=\\
&=1-\binom{N}{2}t+\frac{1}{24}(N-1)(N-2)(3N^2+11N)t^{2}+O(t^{3}).
\end{align*}
One uses the estimations $\varphi_j=\frac{1}{j}t^{-j}+O(t^{-(j-1)})$ as well as
\vspace{-3.7mm}
\begin{align*}
&F_j:=(1-t^{j})^{N-1}(1+(N-1)t^{j})=\\
&=1+\left(\binom{N-1}{2}-(N-1)^2\right)t^{2j}+\\
&+\left(\binom{N-1}{2}(N-1)-\binom{N-1}{3}\right)t^{3j}+O(t^{4j})=\\
&=1+\frac{N-1}{2}\cdot (N-2-2(N-1))t^{2j}+\\
&+(N-1)(N-2)\left(\frac{N-1}{2}-\frac{N-3}{6}\right)t^{3j}+O(t^{4j})\\
&=1-\binom{N}{2}t^{2j}+\frac{1}{3}N(N-1)(N-2)t^{3j}+O(t^{4j}).
\end{align*}
If one chooses $x$ times the term with exponent $-kj$ (for $k\geq 2$) expanding $\prod_{j=1}^{\infty}F_j^{\varphi_j}$, one gets a term of the form $C(N)\binom{\varphi_j}{x}t^{xkj}=O(t^{(k-1)xj})$ with $C(N)$ only depending on $N$. Thus, one is only interested in the case $k-1=x=j=1$ and in the case that one number from the set $\{k-1,x,j\}$ is equal to $2$ and the others are equal to $1$. In particular, the considered probability is 
\begin{align*}
&\underbrace{\left(1-\binom{N}{2}t^{2}+\frac{1}{3}N(N-1)(N-2)t^{3}\right)^{t^{-1}}}_{j=1,\ k\leq 3}\underbrace{\left(1-\binom{N}{2}t^{4}\right)^{\frac{1}{2}(t^{-2}-t^{-1})}}_{j=2,\ k\leq 2}+O(t^{3})=\\
&=\left(1-\underbrace{\binom{N}{2}t}_{k=2,\ x=1}+\underbrace{\frac{1}{3}N(N-1)(N-2)t^{2}}_{k=3,\ x=1}+\underbrace{\binom{t^{-1}}{2}\binom{N}{2}^2t^{4}}_{k=2,\ x=2}+O(t^{3})\right)\cdot\\
&\cdot\left(1-\binom{N}{2}t^{4}\cdot\frac{1}{2}t^{-2}+O(t^{3})\right)+O(t^{3})=\\
&=1-\binom{N}{2}t+\left(\frac{1}{3}N(N-1)(N-2)+\frac{N^2(N-1)^2}{8}-\frac{N(N-1)}{4}\right)t^{2}+O(t^{3})\\
&=1-\binom{N}{2}t+\left(\frac{1}{3}N(N-1)(N-2)+\frac{N(N-1)}{8}(N(N-1)-2)\right)t^{2}+O(t^{3})\\
&=1-\binom{N}{2}t+\left(\frac{1}{3}N(N-1)(N-2)+\frac{1}{8}(N-1)(N-2)(N+1)N \right)t^{2}+O(t^{3})\\
&=1-\binom{N}{2}t+\frac{1}{24}(N-1)(N-2)(3N^2+11N)t^{2}+O(t^{3}),
\end{align*}
which completes the proof of the corollary.
\end{proof}
Corollary \ref{gem} leads to the same asymptotic formula as Theorem \ref{3} with setting $N_1=0$, although different concepts of probability were used. This concordance could be explained in the following way: First, computing the natural density of pairwise coprimeness, those tuples of polynomials which contain a linear polynomial could be neglected. Moreover, the case that $d_i\equiv 0$ for some $i\in\{1,\cdots, n\}$ could be neglected and hence, considering monic polynomials does not change the probability because two polynomials are coprime if and only if the corresponding monic polynomials are coprime. Thus, all degree dependencies of the considered coefficients in the asymptotic expansion could be neglected. 
Hence, for sufficiently large $n_i$ the (uniform) probability is identical for all values $n_i\in\mathbb N$.
Therefore, choosing the polynomials randomly with $\deg(d_i)\leq n_i$, the probability could be regarded as identical for all values $n_i\in\mathbb N$ since the set of polynomials with $\deg(d_i)\leq n_i$ is a disjunct union of the sets of polynomials whose degree is a fixed value less or equal to $n_i$. But the sets defined by the condition $\deg(d_i)\leq n_i$ form a subsequence of $M_n$. Consequently, if one knows that the limit defining the natural density, i.e. $\lim_{n\rightarrow \infty}\frac{|E\cap M_n|}{|M_n|}$, exists, one could conclude that it is equal to the constant value for this subsequence. 
Note that, in this case, $\lim_{n\rightarrow\infty}\left(\frac{|E\cap M_n|}{|M_n|}-\left(1-\frac{N(N-1)}{2}\cdot t+\frac{(N-1)(N-2)(3N^2+11N-12N_1)}{24}\cdot t^2\right)\right)$ exists\\
and therefore, the coefficient of $O(t^3)$ cannot go to infinity for $n\rightarrow\infty$.

\section{Mutual Left Coprimeness of Polynomial Matrices}

The aim of this section is to compute the natural density of mutual left coprime polynomial matrices from $\mathbb F[z]^{m\times m}$ and compare it with the uniform probability that $N$ nonsingular polynomial matrices are mutual left coprime, which was estimated in \cite{l}.

\begin{theorem}\label{mut}\cite[Theorem 12]{l}\ \\
For $m, N\geq 2$ and $n_i\in\mathbb N$ for $i=1,\hdots,N$, the uniform probability that $D_i\in\mathbb F[z]^{m\times m}$ with $\deg(\det(D_i)=n_i$ for $i=1,\hdots, n$ are mutually left coprime is 
$$1-\sum_{y=2}^{m+1}\binom{N}{y}t^m+O(t^{m+1}).$$
\end{theorem}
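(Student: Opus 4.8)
The plan is to work pointwise over $\overline{\mathbb F}$ and reduce mutual left coprimeness to an \emph{independence condition on kernels}. By Theorem \ref{mutcrit} together with the cited rank criterion for left coprimeness, the matrices $D_1,\dots,D_N$ are mutually left coprime if and only if $\operatorname{rk}\mathcal D_N(\lambda)=(N-1)m$ for every $\lambda\in\overline{\mathbb F}$. First I would compute the left kernel of $\mathcal D_N(\lambda)$ explicitly. Writing a left null vector as $(v_1,\dots,v_{N-1})$ with $v_k\in\mathbb F^m$ and substituting $w_1:=v_1$, $w_j:=v_{j-1}+v_j$ for $2\le j\le N-1$ and $w_N:=v_{N-1}$, the defining equations become $w_jD_j(\lambda)=0$ for all $j$ together with the single relation $\sum_{j=1}^N(-1)^{N-j}w_j=0$. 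Hence, writing $L_j:=\{w\in\mathbb F^m\mid wD_j(\lambda)=0\}$ for the left kernel of $D_j(\lambda)$, the bijection $(v_k)\mapsto(w_j)$ onto that hyperplane yields $\dim\ker_L\mathcal D_N(\lambda)=\sum_{j=1}^N\dim L_j-\dim(L_1+\dots+L_N)$. Therefore $\mathcal D_N$ is left prime if and only if, for every $\lambda\in\overline{\mathbb F}$, the subspaces $L_1(\lambda),\dots,L_N(\lambda)$ form a direct sum.

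With this characterization, mutual left coprimeness fails precisely when, at some $\lambda$, the kernels $L_j(\lambda)$ are linearly dependent; selecting a minimal dependent configuration singles out a subset $S\subseteq\{1,\dots,N\}$ whose kernels form a \emph{circuit}. The next step is to count, to leading order in $t$, the tuples for which such a circuit occurs. I would argue that the dominant contribution comes from points $\lambda\in\mathbb F$ (there are $t^{-1}$ of them) at which every $D_j(\lambda)$, $j\in S$, has corank exactly $1$ with kernel a line $\ell_j$, and the lines $\{\ell_j\}_{j\in S}$ form a circuit, i.e.\ span a subspace of dimension $|S|-1$ while every proper subset is independent. Since $|S|$ lines spanning dimension $|S|-1$ require $|S|-1\le m$, only subsets of size $y=|S|$ with $2\le y\le m+1$ can occur, which already explains the range of the summation.

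The heart of the estimate is a counting computation based on Lemma \ref{glcar}. For a fixed $\lambda\in\mathbb F$ the proportion of matrices in $\mathbb F^{m\times m}$ of corank $1$ is $t+O(t^2)$, and by symmetry over the $\tfrac{t^{-m}-1}{t^{-1}-1}\approx t^{-(m-1)}$ lines, the probability that $D_j(\lambda)$ has left kernel exactly a prescribed line is $t^m(1+o(1))$. For a fixed subset $S$ of size $y$, the number of labelled circuits of $y$ lines is of order $t^{-(y-1)(m-y+1)-y(y-2)}$ (choose a $(y-1)$-dimensional span and $y$ lines in general position inside it), so the per-point probability that the kernels of $\{D_j\}_{j\in S}$ form such a circuit is of order $t^{-(y-1)(m-y+1)-y(y-2)}\cdot t^{my}=t^{m+1}$, the exponent collapsing to $m+1$ independently of $y$. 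Summing over the $t^{-1}$ points $\lambda\in\mathbb F$ and the $\binom{N}{y}$ subsets of size $y$ gives a failure contribution $\binom{N}{y}t^{m}+O(t^{m+1})$ for each $y$, hence a total failure probability $\sum_{y=2}^{m+1}\binom{N}{y}t^{m}+O(t^{m+1})$, i.e.\ the asserted formula.

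It remains to control the error terms, and this is where I expect the main difficulty. Three sources must be shown to be $O(t^{m+1})$: matrices $D_j(\lambda)$ of corank $\ge 2$ (probability $O(t^4)$ per matrix, hence negligible); failures at closed points of degree $d\ge 2$ (by Lemma \ref{anzirred} there are $\approx t^{-d}/d$ of them, each contributing $\approx t^{d(m+1)}$, so the degree-$d$ total is $O(t^{dm})=O(t^{m+1})$ for $m\ge 2$, $d\ge2$); and overcounting between distinct points and distinct subsets, handled by the inclusion--exclusion principle (Lemma \ref{ine}), whose cross terms involve at least two independent circuit events and are therefore $O(t^{2m})=O(t^{m+1})$. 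The genuinely delicate point, however, is justifying the local probability $t^m(1+o(1))$ for a prescribed kernel line: the $D_j$ are not uniform in $\mathbb F^{m\times m}$ but are constrained to be nonsingular with $\deg\det D_j=n_j$, so one must show that evaluation at a fixed $\lambda$ (equivalently, reduction modulo the corresponding irreducible) pushes the uniform distribution on the constrained set to the uniform distribution on $\mathbb F^{m\times m}$, up to an error not affecting the coefficient of $t^m$. Since $D_j$ carries $\approx n_j+1$ free coefficient matrices while $D_j(\lambda)$ ranges over only $t^{-m^2}$ values, the evaluation map is highly surjective and this approximation should hold; making it rigorous \emph{uniformly} in $\lambda$ and in the degree constraints is the crux of the argument.
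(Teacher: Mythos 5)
First, a point of reference: the paper does not actually prove Theorem \ref{mut} --- it imports it from \cite[Theorem 12]{l}. The closest argument the paper itself contains is the computation of $W_j(N)$ for \emph{constant} matrices, which proceeds by a recursion on $N$ (Lemma \ref{reccon}, via Lemma \ref{ki} and inclusion--exclusion over which $K_i$ are nonsingular) together with a direct count of left-kernel vectors for $\hat W_j(N)$. Your route is genuinely different: you localize at points $\lambda\in\overline{\mathbb F}$, identify the left kernel of $\mathcal D_N(\lambda)$ with $\ker\bigl(L_1\oplus\cdots\oplus L_N\to\mathbb F^m\bigr)$ under the signed-sum map, and count minimal dependent configurations (``circuits'') of the kernels. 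That reduction is correct, the range $2\le y\le m+1$ falls out for the right reason, and the exponent computation $my-(y-1)(m-y+1)-y(y-2)=m+1$ is correct and is the real content of the main term. As an outline this is attractive and arguably more conceptual than the recursion in \cite{l}.

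However, there is a genuine gap, and you have named it yourself: the entire count rests on the claim that, for $D_j$ uniform on $\{D\in\mathbb F[z]^{m\times m}:\deg\det D=n_j\}$, the evaluation $D_j(\lambda)$ is uniform enough on $\mathbb F^{m\times m}$ that the probability of a prescribed kernel line is $t^m(1+o(1))$ --- and, for the Bonferroni lower bound, that the joint distribution of $(D_j(\lambda),D_j(\lambda'))$ at distinct points is close enough to product uniform that pair terms are $O(t^{m+1})$. This is not a routine surjectivity remark. The constraint $\deg\det D_j=n_j$ is a global algebraic condition, not a product condition on coefficient matrices, and the paper's own concluding example ($m=2$, $n_i=1$, exact probability $1-t^2/(1+t)$ versus natural density $(1-t^2)(1-t^3)$) shows that the evaluation distribution genuinely deviates from uniform in a degree-dependent way already at order $t^3$; one must therefore \emph{prove}, uniformly in $\lambda$ and in $(n_1,\dots,n_N)$ with $n_i\ge 1$, that the deviation does not touch the coefficient of $t^m$. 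Without that lemma (and without carrying out the overlap terms in the inclusion--exclusion at a single $\lambda$, where the circuit events for intersecting subsets $S,S'$ are far from independent), the argument is a heuristic for the main term rather than a proof. Closing this is not cosmetic: it is exactly where the degree hypotheses $m,N\ge 2$, $n_i\in\mathbb N$ must enter, and neither your sketch nor anything in the present paper supplies it.
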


In the following, we want to compare the preceding result with the formula one gets for the natural density. It will turn out that, as in the previous section, the problem of computing the natural density could be reduced to the calculation of the (uniform) probability that $N$ constant matrices are mutually left coprime.
To show this statement, i.e. to prove Theorem \ref{end}, we need the following proposition, which was proven in \cite{gm}. There, the problem of computing densities was considered in a more general setup, namely in the context of integrally closed subrings of global function fields. Specifying the more general statement of Theorem 2.2 from \cite{gm} to the case of polynomial rings over finite fields, one gets the following result:

\begin{proposition}\cite[Theorem 2.2]{gm}\label{gia}\ \\
Let $H:=\mathbb F[z]$ and let $p,q\in H[x_1,\hdots,x_d]$ be coprime polynomials. Define $\hat{M}_{c}:=\{y\in H\ |\ \deg(y)\leq c\}$ and\\
$A_g:=\{y\in H^d\ |\ p(y)\equiv q(y)\equiv 0 \mod f\ \text{for some}\ f\in H\ \text{with}\ \deg(f)\geq g+1\}$.
Then
$$\lim_{g\rightarrow\infty}\limsup_{c\rightarrow\infty}\frac{|A_g\cap \hat{M}_{c}|}{|\hat{M}_{c}|}=0.$$
\end{proposition}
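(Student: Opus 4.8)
My plan is to reduce the statement to a single uniform estimate---that the average degree of $\gcd(p(y),q(y))$ over $y\in\hat M_c$ stays bounded as $c\to\infty$---and then to finish with Markov's inequality. Throughout I read $\hat M_c$ as $\{y=(y_1,\dots,y_d)\in H^d\mid \deg(y_i)\le c\ \forall i\}$, so that $|\hat M_c|=t^{-d(c+1)}$, and I write $D(y):=\gcd(p(y),q(y))$ whenever $p(y),q(y)\neq 0$. First I would discard the degenerate tuples. The sets $\{y:p(y)=0\}$ and $\{y:q(y)=0\}$ are zero loci of fixed nonzero polynomials, and fixing all coordinates but one and using that a nonzero univariate polynomial over $H$ has only finitely many roots shows (by induction on $d$) that each has natural density $0$; since they contain the part of $A_g$ on which $p(y)$ or $q(y)$ vanishes, they contribute $0$ to $\limsup_c|A_g\cap \hat M_c|/|\hat M_c|$ for every $g$. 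On the remaining tuples a divisor $f$ with $\deg f\ge g+1$ of both $p(y)$ and $q(y)$ divides $D(y)$, so $y\in A_g$ forces $\deg D(y)\ge g+1$, and Markov's inequality gives
\[
\limsup_{c\to\infty}\frac{|A_g\cap \hat M_c|}{|\hat M_c|}\ \le\ \frac{1}{g+1}\,\limsup_{c\to\infty}\frac{1}{|\hat M_c|}\sum_{y\in \hat M_c}\deg D(y).
\]
It therefore suffices to bound $E_c:=|\hat M_c|^{-1}\sum_{y\in \hat M_c}\deg D(y)$ by a constant independent of $g$ and of $c$, for then letting $g\to\infty$ kills the right-hand side.

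To control $E_c$ I would expand $\deg D(y)=\sum_{\pi}\deg(\pi)\,v_\pi(D(y))$ over monic irreducibles $\pi$ and hence write $E_c=\sum_\pi\deg(\pi)\sum_{k\ge1}P_c(\pi^k)$, where $P_c(\pi^k)$ is the fraction of $y\in\hat M_c$ with $\pi^k\mid p(y)$ and $\pi^k\mid q(y)$. Because $p,q$ are coprime in the UFD $H[x_1,\dots,x_d]=\mathbb F[z,x_1,\dots,x_d]$, the locus $V(p,q)$ has codimension $2$; by upper semicontinuity of fibre dimension for its projection onto the $z$-line, the reductions $\bar p,\bar q$ remain coprime over $k_\pi:=H/(\pi)$ for all but finitely many $\pi$. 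For these \emph{good} primes the common zero set of $\bar p,\bar q$ in $k_\pi^{\,d}$ has codimension $2$, and standard point counts modulo prime powers give $\sum_{k\ge1}P_c(\pi^k)\le C_1\,t^{2\deg\pi}$ uniformly in $c$; summing against Lemma \ref{anzirred}, which provides $\varphi_j=\tfrac1j t^{-j}+O(t^{-(j-1)})$ irreducibles of degree $j$, yields $\sum_{\text{good }\pi}\deg(\pi)\,C_1 t^{2\deg\pi}\le C_1\sum_{j\ge1}j\,\varphi_j\,t^{2j}<\infty$.

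For the finitely many remaining primes I would invoke the resultant. After a constant linear change of the variables $x_i$ (a bijection of $H^d$ that does not affect densities) I may assume $p$ and $q$ have positive degree in $x_d$; coprimeness together with Gauss's lemma then makes $S:=\operatorname{Res}_{x_d}(p,q)\in H[x_1,\dots,x_{d-1}]$ a nonzero element of the ideal $(p,q)$, so that $D(y)\mid S(y_1,\dots,y_{d-1})$ whenever $S(y')\neq 0$. Hence $v_\pi(D(y))\le v_\pi(S(y'))$ off the density-$0$ locus $\{S(y')=0\}$, and since the mean $\pi$-adic valuation of a fixed nonzero polynomial over $\hat M_c$ is bounded uniformly in $c$, each of the finitely many \emph{bad} primes adds only a bounded amount to $E_c$. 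Together with the previous paragraph this gives $\limsup_c E_c\le C$ and finishes the proof.

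The hard part will be the uniformity in the good-prime estimate: turning the geometric codimension-$2$ statement into an honest bound on $P_c(\pi^k)$ with a constant $C_1$ depending only on the degrees of $p$ and $q$, uniform in both $k$ and $c$. This requires care for two reasons. Singular points of $V(\bar p,\bar q)$ can inflate the valuations $v_\pi(D)$, so the naive heuristic \mbox{``density $\approx t^{2k\deg\pi}$''} must be replaced by a genuine solution count modulo $\pi^k$; and the finite-level ratio $P_c(\pi^k)$ only matches the local density when $\deg(\pi^k)$ is small compared with $c$, so the boundary regime $\deg(\pi^k)\asymp c$ has to be absorbed into the error term separately.
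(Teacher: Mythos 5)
The paper does not prove this proposition: it is quoted as Theorem 2.2 of \cite{gm} and used as a black box, so there is no internal proof to measure yours against. Your reduction is sound as far as it goes: discarding the density-zero loci $\{p(y)=0\}$, $\{q(y)=0\}$ and applying Markov's inequality correctly reduces the claim to the uniform boundedness in $c$ of $E_c=|\hat M_c|^{-1}\sum_{y}\deg\gcd(p(y),q(y))$. The genuine gap is that the one estimate carrying the entire weight of the argument, namely $\sum_{k\ge 1}P_c(\pi^k)\le C_1 t^{2\deg\pi}$ uniformly in $c$ and in the good prime $\pi$, is asserted as a ``standard point count'' when it is exactly the difficulty this proposition exists to overcome. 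Two independent obstructions, both of which you name in your closing paragraph but neither of which you resolve, sit precisely here. (a) For $k\ge 2$, the number of solutions of $\bar p\equiv\bar q\equiv 0 \bmod \pi^k$ is controlled by Hensel lifting only at smooth points of $V(\bar p,\bar q)$; at singular points the number of lifts can be far larger, and obtaining $P_c(\pi^k)=O(t^{2\deg\pi}\rho^k)$ with constants uniform in $\pi$ and $k$ requires real input (bounds on points modulo prime powers, or the elimination and induction-on-dimension argument that \cite{gm} actually carries out). (b) Reduction of $\hat M_c$ modulo $\pi^k$ is equidistributed only for $\deg(\pi^k)\le c+1$; in the range $c+1<\deg(\pi^k)\le C(c+1)$ the ratio $P_c(\pi^k)$ is not the local density and cannot be read off from the geometric count over the residue field, yet these large moduli are exactly the ones responsible for the illegitimate interchange of $\limsup$ and summation in \cite{guo} and \cite{guo2} that this proposition is invoked to repair. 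Until (a) and (b) are established with constants independent of $c$, $k$ and $\pi$, the argument is a plausible outline rather than a proof.

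A smaller point: the assertion in your bad-prime paragraph that the mean $\pi$-adic valuation of the resultant $S(y')$ over $\hat M_c$ is bounded uniformly in $c$ is a statement of the same nature as (a) and (b) for a single codimension-one polynomial and also needs justification, though for finitely many fixed primes it is more tractable (for instance by induction on the number of variables from the univariate case).
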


For the proof of Theorem \ref{end}, we also need the following definition and lemma.

\begin{definition}\ \\
For $j\in\mathbb N$, denote by $W_j(N)$ the probability that  $\mathcal{K}_N:=\left[\begin{array}{cccc}
K_1 & K_2 & 0 & 0 \\ 
0 & \ddots & \ddots & 0 \\ 
0 & 0 & K_{N-1} & K_N
\end{array}\right]$ 
with $K_i\in(\mathbb F^j)^{m\times m}$ for $i=1,\hdots,N$ is of full row rank, i.e. that the matrices $K_i$ are mutually left coprime.\\
For $I\subset\{1,\hdots,N\}$, denote by $\mathcal{K}^{(I)}_{N-|I|}$ the matrix formed by the matrices from the set $\{K_i\ |\ i\notin I\}$.
\end{definition}

\begin{lemma}\label{ki}\ \\
If $\det(K_i)\neq 0$ for all $i\in I$, then $\mathcal{K}_N$ has full row rank if and only if $\mathcal{K}^{(I)}_{N-|I|}$ has full row rank.
\end{lemma}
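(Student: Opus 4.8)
The plan is to translate ``full row rank'' into triviality of the left kernel and then use the invertible blocks $K_i$, $i\in I$, to eliminate exactly one block-coordinate of a kernel vector per removed index. Over the field $\mathbb F$ a matrix has full row rank if and only if its left kernel is trivial, so it suffices to show that $\mathcal K_N$ and $\mathcal K^{(I)}_{N-|I|}$ have left kernels of the same dimension; in fact I would exhibit an explicit linear isomorphism between them, which immediately gives the ``if and only if''.

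First I would reduce to the case $|I|=1$ by induction on $|I|$. Having removed one index $i_0\in I$, the remaining prescribed blocks $K_i$ with $i\in I\setminus\{i_0\}$ are left untouched, hence still invertible, and they sit inside the bidiagonal matrix built from the surviving $N-1$ matrices, which is precisely $\mathcal K^{(\{i_0\})}_{N-1}$. Applying the one-step result again and chaining the equivalences then yields the claim for arbitrary $I$, since deleting $I\setminus\{i_0\}$ from $\mathcal K^{(\{i_0\})}_{N-1}$ produces exactly $\mathcal K^{(I)}_{N-|I|}$.

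For a single removal I would write a prospective left-kernel element of $\mathcal K_N$ as a block row vector $v=(v_1,\dots,v_{N-1})$ with $v_l\in\mathbb F^m$ and read off $v^\top\mathcal K_N=0$ column by column. Because each interior block column $c$ carries the single matrix $K_c$ in rows $c-1$ and $c$, these conditions are $v_1^\top K_1=0$, then $(v_{c-1}+v_c)^\top K_c=0$ for $2\le c\le N-1$, and finally $v_{N-1}^\top K_N=0$. The crucial step is that when $\det(K_i)\ne 0$ the $i$-th equation becomes a linear relation pinning down one coordinate: $v_1=0$ if $i=1$, $v_{N-1}=0$ if $i=N$, and $v_i=-v_{i-1}$ for an interior $i$. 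Substituting this relation into the two neighbouring column conditions reproduces, up to an overall sign on the affected coordinates, the left-kernel conditions of the smaller bidiagonal matrix $\mathcal K^{(\{i\})}_{N-1}$. Concretely, for interior $i$ the assignment $u_c=v_c$ for $c\le i-1$ and $u_c=-v_{c+1}$ for $c\ge i$ is a linear bijection carrying the kernel of $\mathcal K_N$ onto the kernel of $\mathcal K^{(\{i\})}_{N-1}$, with $v_i$ recovered as $-v_{i-1}$; the boundary cases $i=1,N$ are the same re-indexing without any sign change. Hence one kernel is trivial exactly when the other is, which is the assertion.

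I expect the only genuine work to lie in this last step: correctly matching indices across the deleted block and tracking the signs introduced by $v_i=-v_{i-1}$, which propagate along the bidiagonal but are harmless for the rank since they amount to multiplying block coordinates by $\pm I$. I would isolate the boundary indices $i=1,N$ as separate, easier cases so that the substitution never ``wraps around'', and I would take care that the configuration after each removal still meets the hypothesis, i.e.\ that the remaining prescribed blocks stay invertible, which holds because deletion leaves them unchanged.
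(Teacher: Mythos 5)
Your proof is correct. It takes a route that is dual to, but not identical with, the paper's: the paper performs block column operations, using the invertible block $K_N$ (after reducing without loss of generality to $I=\{N-|I|+1,\dots,N\}$) to clear the adjacent $K_{N-1}$ in the last block row, so that the last block row and column split off and the claim follows by iterating; you instead translate full row rank into triviality of the left kernel and exhibit an explicit linear isomorphism between the left kernels of $\mathcal K_N$ and $\mathcal K^{(\{i\})}_{N-1}$, using $\det(K_i)\neq 0$ to solve the $i$-th column condition for $v_i$ (namely $v_1=0$, $v_{N-1}=0$, or $v_i=-v_{i-1}$) and then re-indexing with the appropriate signs. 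The substitution you give does check out, including the degenerate interior case $i=N-1$ where the second branch of the re-indexing is empty. What your version buys is that it removes an arbitrary index, boundary or interior, directly, so you never need the paper's "otherwise permutate the matrices $K_i$" step, whose harmlessness (invariance of the rank of $\mathcal K_N$ under permuting the $K_i$) is itself not entirely obvious; the cost is the index and sign bookkeeping, which the column-operation argument avoids by always working at the boundary.
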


\begin{proof}\ \\
Assume without restriction that $I=\{N-|I|+1,\hdots,N\}$ (otherwise permutate the matrices $K_i$). Since $\det(K_N)\neq 0$, the columns of $K_N$ form a basis of $(\mathbb F^{j})^m$ and adding appropriate linear combinations of the last $m$ columns to the $m$ preceding columns of $\mathcal{K}_N$ brings  $\mathcal{K}_N$ to the form $\left[\begin{array}{ccccc}
K_1 & K_2 & 0 & \hdots & 0 \\ 
0 & \ddots & \ddots & \ddots & 0 \\ 
0 & 0 & K_{N-2} & K_{N-1} & 0\\
 0 & 0  & 0 & 0 & K_{N} \\
\end{array}\right]$, which is not left prime if and only if the submatrix consisting of the first $(N-2)m$ rows is not left prime. One iterates this procedure with the matices $K_{N-1},\hdots,K_{N-|I|+1}$ and the result follows per induction.
\end{proof}

For the proof of the following Theorem, we expand and modify the idea of the proof for Theorem 1 in \cite{guo}.

\begin{theorem}\label{end}\  \\
The natural density of $N$ matrices $D_i\in\mathbb F[z]^{m\times m}$ for $i=1,\hdots,N$ to be mutually left coprime is equal to 
$\prod_{j=1}^{\infty}W_j(N)^{\varphi_j}$.
\end{theorem}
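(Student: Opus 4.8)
The plan is to localize the left-primeness of $\mathcal{D}_N$ at each monic irreducible polynomial, to show that the resulting local conditions become independent in the density, and then to control the contribution of high-degree irreducibles by Proposition \ref{gia}.

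First I would reduce the global condition to local ones. By Theorem \ref{mutcrit} together with the rank criterion \cite[Theorem 2.27]{Fu-He15}, the matrices $D_1,\dots,D_N$ are mutually left coprime if and only if $\mathcal{D}_N(z)$ has full row rank $(N-1)m$ for every $z\in\overline{\mathbb F}$, which by Remark \ref{hauc}(b) is equivalent to saying that no monic irreducible $p$ divides all $(N-1)m$-minors of $\mathcal{D}_N$. Fixing such a $p$ of degree $j$ and reducing entrywise modulo $p$ sends each $D_i$ to a matrix $K_i:=D_i\bmod p$ over the field $\mathbb F[z]/(p)$, which I identify with $(\mathbb F^j)^{m\times m}$; since the minors lie in $\mathbb F[z]$, $p$ divides all of them if and only if the block-bidiagonal matrix $\mathcal{K}_N$ built from $K_1,\dots,K_N$ fails to have full row rank over $\mathbb F[z]/(p)$. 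Hence, once the reductions are seen to be uniform, the ``good at $p$'' event has probability exactly $W_j(N)$.

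Next I would establish the factorization for finitely many primes. Working over the truncations $\hat M_c=\{f:\deg f\le c\}$ as entrywise domain and fixing $g$, the Chinese Remainder Theorem identifies reduction modulo $\prod_{\deg p\le g}p$ with the product of the reductions modulo the individual $p$; and since reduction of degree-$\le c$ polynomials modulo a fixed modulus is an $\mathbb F$-linear surjection with equal fibres once $c$ is large, the reductions $D_i\bmod p$ are jointly uniform and independent across all $i$ and all $p$ with $\deg p\le g$. Consequently, for $c$ large, the probability that $\mathcal{D}_N$ keeps full row rank at the roots of every $p$ of degree $\le g$ equals $\prod_{\deg p\le g}W_{\deg p}(N)=\prod_{j=1}^{g}W_j(N)^{\varphi_j}$, with $\varphi_j$ as in Lemma \ref{anzirred}.

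The main work, and the step I expect to be the obstacle, is the tail estimate bounding the tuples that are good at all primes of degree $\le g$ yet fail at some irreducible of degree $>g$. Here Lemma \ref{ki} is decisive: if a root $\alpha$ of such an $f$ had at most one singular block $D_i(\alpha)$, then removing the invertible blocks collapses $\mathcal{K}_N$ to a system with at most one block, which has full row rank trivially; hence a rank drop at $\alpha$ forces two distinct indices $a\ne b$ with $f\mid\det(D_a)$ and $f\mid\det(D_b)$. Since $\det(D_a)$ and $\det(D_b)$ involve disjoint entry variables they are coprime in $\mathbb F[z][x_1,\dots,x_{Nm^2}]$, so the bad set is contained in $\bigcup_{a\ne b}A_g^{(a,b)}$, where $A_g^{(a,b)}$ is the set of Proposition \ref{gia} for the coprime pair $(\det D_a,\det D_b)$. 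As there are finitely many pairs, Proposition \ref{gia} gives $\lim_{g\to\infty}\limsup_{c\to\infty}|\bigcup_{a\ne b}A_g^{(a,b)}\cap\hat M_c|/|\hat M_c|=0$. This squeezes the truncated products $\prod_{j=1}^{g}W_j(N)^{\varphi_j}$ against the density along $\hat M_c$, so letting $g\to\infty$ produces $\prod_{j=1}^{\infty}W_j(N)^{\varphi_j}$; I would finish by checking that the ragged enumeration $M_n$ of Definition \ref{nd} agrees with the clean truncations $\hat M_c$ in the limit, and that tuples with a singular $D_i$ form a density-zero set, so that the computed value is indeed the natural density. The delicate points are the correct interchange of the limits in $g$, $c$ and $n$, and the use of Lemma \ref{ki} to pin every high-degree rank drop onto a coprime pair of determinants.
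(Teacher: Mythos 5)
Your proposal is correct and follows essentially the same route as the paper: localize at each monic irreducible via the minor criterion, use equidistribution of the reductions modulo finitely many irreducibles to obtain the truncated product $\prod_{j\le g}W_j(N)^{\varphi_j}$, invoke Lemma \ref{ki} to force two singular determinants at any high-degree rank drop, and control the tail with Proposition \ref{gia} applied to the coprime pair of determinants before squeezing the limits. The only detail the paper spells out that you merely flag is the passage from the ragged enumeration $M_n$ to exact equidistribution (done there by a $b$-to-one counting argument plus a sandwich between $M_n$ and $M_{\hat n}$), which works exactly as you anticipate.
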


\begin{proof}\ \\
From Theorem \ref{mutcrit}, one knows that $D_1,\hdots,D_N$ are mutually left coprime if and only if the matrix
$$\mathcal{D}_N:=\left[\begin{array}{ccccc}
D_1 & D_2 & 0 & \cdots & 0 \\ 
0 & D_2 & D_3 & \ddots & \vdots \\ 
\vdots & \ddots & \ddots & \ddots & 0\\
 0 & \cdots & 0 & D_{N-1} & D_{N}
\end{array}\right] $$
is left prime. According to Remark \ref{hauc} (b), this holds if and only if the size $m(N-1)$ minors of $\mathcal{D}_N$ are coprime.

In the following, the notation of Definition \ref{nd} is used.
Let $M_n$ be the set of all tuples $(D_1,\hdots,D_N)\in(\mathbb F[z]^{m\times m})^N$ for which all entries of $D_i$ are contained in $\{f_0,\hdots,f_n\}$ for $i=1,\hdots,N$. Furthermore, let $\hat{P}$ be the set of all (monic) irreducible polynomials in $\mathbb F[z]$ and $P$ a finite subset of $\hat{P}$. Moreover, $E_P$ should denote the set of all tuples $(D_1,\hdots,D_N)\in(\mathbb F[z]^{m\times m})^N$ for which the gcd of all size $m(N-1)$ minors of $\mathcal{D}_N$ is coprime with all elements in $P$. Consequently, we are interested in the probability that $(D_1,\hdots,D_N)\in(\mathbb F[z]^{m\times m})^N$ lies in\\
$E:=\bigcap_P E_P$; i.e., for the natural density one has to determine $\lim_{n\rightarrow\infty}\frac{|E\cap M_n|}{|M_n|}$.

In a first step, one computes the probability that $(D_1,\hdots,D_N)\in M_n$ lies in $E_P$. To this end, one defines $f_P:=\prod_{f\in P}f$ and $d_P:=\deg(f_P)$. Next, consider the projection
\begin{align*}
M_n&\rightarrow M_n/(f_P)=\prod_{f\in P} M_n/(f)\\
(D_1,\hdots,D_N)&\mapsto (D_1,\hdots,D_N)/(f_P)=\prod_{f\in P}(D_1,\hdots,D_N)/(f),
\end{align*}
which applies the canonical projection modulo $f_P$ ($\mathbb F[z]\rightarrow \mathbb F[z]/(f_P)$) to each entry of $(D_1,\hdots,D_N)$. For $(D_1,\hdots,D_N)\in M_n$ holds:
\begin{align*}
&(D_1,\hdots,D_N)\in E_P\\
&\Leftrightarrow \forall f\in P\ \exists\ \text{size}\ m(N-1)\ \text{minor of $\mathcal{D}_N$ that is not divided by}\ f\\
&\Leftrightarrow \forall f\in P\ \exists\ \text{size}\ m(N-1)\ \text{minor of $\mathcal{D}_N$ nonzero in}\  (\mathbb F[z]/(f))^{m(N-1)\times mN}\\
&\Leftrightarrow \forall f\in P: \mathcal{D}_N/(f)\ \text{of full rank in}\ (\mathbb F[z]/(f))^{m(N-1)\times mN}\simeq(\mathbb F^{\deg{f}})^{m(N-1)\times mN},
\end{align*}
where $\mathbb F^{\deg{f}}$ denotes the field with $t^{-\deg(f)}$ elements.
Denote the probability that $\mathcal{D}_N/(f):=\mathcal{K}_N\in(\mathbb F^{\deg(f)})^{(N-1)m\times Nm}$ has full row rank by $W_f$. 

First, suppose that $t^{-d_P}$ divides $|\{f_0,\hdots,f_n\}|=n+1$, i.e. $n=bt^{-d_P}-1$ for some $b\in\mathbb N$. Then, one could write $\{f_0,\hdots,f_n\}=\{f_s(z)z^{d_P}+f_r(z)\ |\ 0\leq s\leq b-1,\ 0\leq r\leq t^{-d_P}-1\}$. One has $\{f_r\ |\ 0\leq r\leq t^{-d_P}-1\}\simeq\mathbb F[z]/(f_P)$ and $f_s(z)z^{d_P}+f_r(z)\mod f_P(z)=f_s(z)z^{d_P}\mod f_P(z)+f_r(z)=\hat{f}_s(z)+f_r(z)$ where $\hat{f}_s(z):=f_s(z)z^{d_P}\mod f_P(z)\in\mathbb F[z]/(f_P)$. Hence, for every fixed $s$ the canonical projection is bijective and on $\{f_0,\hdots,f_n\}$ it is $b$-to-one. In summary, one obtains
\begin{align*}
|E_P\cap M_n|&=b^{m^2N}\cdot \prod_{f\in P}t^{-m^2N\deg(f)}\cdot W_f=(bt^{-d_P})^{m^2N}\cdot \prod_{f\in P}W_f.
\end{align*}
Since $bt^{-d_P}=n+1$, i.e. $(bt^{-d_P})^{m^2N}=|M_n|$, it follows
$$\frac{|E_P\cap M_n|}{|M_n|}=\prod_{f\in P}W_f.$$
Now, suppose $n\in\mathbb N$ arbitrary. By division with remainder, we get $n+1=bt^{-d_P}+r$ with $0\leq r<t^{-d_P}$. One defindes $\hat{n}:=n+t^{-d_P}-r=(b+1)t^{-d_P}-1$. 
Since
\begin{align*}
\lim_{n\rightarrow\infty}\frac{|E_P\cap(M_{\hat{n}}\setminus M_{n})|}{|M_n|}&\leq \lim_{n\rightarrow\infty}\frac{|M_{\hat{n}}|-|M_{n}|}{|M_n|}=\\
&=\lim_{n\rightarrow\infty}\frac{(n+1+t^{-d_P}-r)^{m^2N}-(n+1)^{m^2N}}{(n+1)^{m^2N}}=0,
\end{align*}
one has
\begin{align*}
&\lim_{n\rightarrow\infty}\frac{|E_P\cap M_n|}{|M_n|}=\lim_{n\rightarrow\infty}\frac{|E_P\cap M_{\hat{n}}|-|E_P\cap(M_{\hat{n}}\setminus M_{n})|}{|M_n|}=\lim_{n\rightarrow\infty}\frac{|E_P\cap M_{\hat{n}}|}{|M_{n}|}\\
&=\lim_{n\rightarrow\infty}\frac{(n+t^{-d_P}-r+1)^N\prod_{f\in P}W_f}{(n+1)^N}=\prod_{f\in P}W_f.
\end{align*}
To estimate $W_f$, we show
%
that at least $2$ of the matrices $K_i$ have zero determinant if $\mathcal{K}_N$ is not of full row rank.

 If $N=2$, this clearly is true because $K_1$ and $K_2$ are left coprime if not both of them have zero determinant. For $N\geq 3$, assume without restriction that $\det(K_N)\neq 0$ (otherwise permutate the matrices $K_i$).
Per induction and using Lemma \ref{ki}, it follows that at least two of the matrices $K_1,\hdots,K_{N-1}$ have zero determinant, which gives us the desired result.


Define $H_f=\mathbb F[z]^{m^2N}\setminus E_f$.
Let $P_g$ be the set of all irreducible polynomials with degree at most $g$. Then $E_{P_g}\setminus E\subset\bigcup_{f\in\hat{P}\setminus P_g}H_f$ and consequently, 
\begin{align*}
\frac{|(E_{P_g}\setminus E)\cap M_n|}{|M_n|}\leq\frac{|(\bigcup_{f\in\hat{P}\setminus P_g}H_f)\cap M_n|}{|M_n|}
\end{align*}
As shown above, $\mathcal{D}_N\in H_f$ implies that there exist $i,j\in\{1,\hdots,N\}$ with $i\neq j$ and $\det(D_i)\equiv\det(D_j)\equiv 0 \mod f$.
We apply Proposition \ref{gia} with $d=m^2N$, $p=\det(D_1)$ and $q=\det(D_2)$ considered as polynomials in the polynomial entries of $D_1$ and $D_2$. Since $p$ has the entries of $D_1$ as variables and $D_2$ has the entries of $D_2$ as variables, the two polynomials have no common variable and are therefore coprime.
Moreover, write $n+1=ct^{-1}+w$ with $c,w\in\mathbb N_0$ and $w<t^{-1}$.
Defining $A_g$ as in Proposition \ref{gia}, one gets
\begin{align*}
\frac{|(\bigcup_{f\in\hat{P}\setminus P_g}H_f)\cap M_n|}{|M_n|}&\leq \binom{N}{2}\frac{|A_g\cap M_n|}{|M_n|}\leq \binom{N}{2}\frac{|A_g\cap \hat{M}_{c+1}|}{|\hat{M}_{c+1}|}\frac{|\hat{M}_{c+1}|}{|M_n|}\\
&\leq \binom{N}{2} t^{-1}\frac{|A_g\cap \hat{M}_{c+1}|}{|\hat{M}_{c+1}|} 
\end{align*}
since there are $\binom{N}{2}$ possibilities to choose $D_i$ and $D_j$ with $i\neq j$ and\\
 $|M_n|\geq t^{-(c+1)}=t\cdot|\hat{M}_{c+1}|$.
Therefore,
\begin{align*}
\lim_{g\rightarrow\infty}\limsup_{n\rightarrow\infty}\frac{|(E_{P_g}\setminus E)\cap M_n|}{|M_n|}&\leq
\lim_{g\rightarrow\infty}\limsup_{n\rightarrow\infty}\frac{|(\bigcup_{f\in\hat{P}\setminus P_g}H_f)\cap M_n|}{|M_n|}\\
&\leq\binom{N}{2} t^{-1} \lim_{g\rightarrow\infty}\limsup_{c\rightarrow\infty}\frac{|A_g\cap \hat{M}_{c+1}|}{|\hat{M}_{c+1}|}=0
\end{align*}
where the last equality follows from Proposition \ref{gia}.

Since $E\cap M_n=E_{P_g}\cap M_n\setminus ((E_{P_g}\setminus E)\cap M_n)$, one obtains
\begin{align*}
&\liminf_{n\rightarrow\infty}\frac{|E\cap M_n|}{|M_n|}\geq\liminf_{n\rightarrow\infty}\frac{|E_{P_g}\cap M_n|}{|M_n|}-\limsup_{n\rightarrow\infty}\frac{|(E_{P_g}\setminus E)\cap M_n|}{|M_n|}
\end{align*}
and hence
$$\lim_{g\rightarrow\infty} \liminf_{n\rightarrow\infty}\frac{|E\cap M_n|}{|M_n|}\geq\lim_{g\rightarrow\infty}\liminf_{n\rightarrow\infty}\frac{|E_{P_g}\cap M_n|}{|M_n|}=\lim_{g\rightarrow\infty}\lim_{n\rightarrow\infty}\frac{|E_{P_g}\cap M_n|}{|M_n|}$$
as well as
\begin{align*}
\limsup_{n\rightarrow\infty}\frac{|E\cap M_n|}{|M_n|}&\leq\limsup_{n\rightarrow\infty}\frac{|E_{P_g}\cap M_n|}{|M_n|}-\liminf_{n\rightarrow\infty}\frac{|(E_{P_g}\setminus E)\cap M_n|}{|M_n|}\\
&\leq\lim_{n\rightarrow\infty}\frac{|E_{P_g}\cap M_n|}{|M_n|}.
\end{align*}
It follows
\begin{align*}
\lim_{n\rightarrow\infty}\frac{|E\cap M_n|}{|M_n|}&=\lim_{g\rightarrow\infty}\lim_{n\rightarrow\infty}\frac{|E_{P_g}\cap M_n|}{|M_n|}=\lim_{g\rightarrow\infty}\prod_{f\in P_g}W_f=\lim_{g\rightarrow\infty}\prod_{j=1}^gW_j(N)^{\varphi_j}=\\
&=\prod_{j=1}^{\infty}W_j(N)^{\varphi_j}.
\end{align*}

\begin{remark}\label{fix}\ \\
As mentioned before, there is a problem with the proofs for the formulas of the natural density in \cite{guo} and \cite{guo2}. One has to show
\begin{align}\label{prop}
\lim_{g\rightarrow\infty}\limsup_{n\rightarrow\infty}\frac{|(E_{P_g}\setminus E)\cap M_n|}{|M_n|}\leq\lim_{g\rightarrow\infty}\limsup_{n\rightarrow\infty}\frac{|(\bigcup_{f\in\hat{P}\setminus P_g}H_f)\cap M_n|}{|M_n|}=0.
\end{align}
To do this, the authors of \cite{guo} and \cite{guo2} use the following chain of inequalities
\begin{align*}
&\limsup_{n\rightarrow\infty}\frac{|(\bigcup_{f\in\hat{P}\setminus P_g}H_f)\cap M_n|}{|M_n|}\leq \limsup_{n\rightarrow\infty}\frac{\sum_{f\in\hat{P}\setminus P_g}|H_f\cap M_n|}{|M_n|}\\
&\leq\sum_{f\in\hat{P}\setminus P_g}\limsup_{n\rightarrow\infty}\frac{|H_f\cap M_n|}{|M_n|}.
\end{align*}
It has already been noticed by Micheli and Schnyder \cite{reto} that one needs additional argumentation to show that the second inequality is really true, i.e. that the superior limit can be put into the series. As mentioned above (see Propositon \ref{gia}), in \cite{gm}, the author presents a way to prove \eqref{prop} in a more general setup.

For $m=1$, i.e. for the case of pairwise coprime polynomials, the problem can be fixed in a more elementary way employing the Lemma of Fatou \cite[p.82, p.89]{fitz} using the counting measure. To apply this lemma, one has to show the uniform convergence (in $n$) of 
$$\sum_{f\in\hat{P}\setminus P_g}\frac{|H_f\cap \tilde{M}_n|}{|\tilde{M}_n|}\quad \text{where}\quad \tilde{M}_n:=M_n\setminus\{d_i\equiv d_j\equiv 0\ \text{for some $i\neq j$}\}.$$
It is sufficient to consider $\tilde{M}_n$ since $(E_{P_g}\setminus E)\cap M_n=(E_{P_g}\setminus E)\cap \tilde{M}_n$ because $E\cap \{d_i\equiv d_j\equiv 0\ \text{for some $i\neq j$}\}=\emptyset$. One could assume $t^{-\deg(f)}\leq n$ since $f$ cannot divide non-zero polynomials of degree less than $\deg(f)$ and in $H_f$, there exist at least two polynomials $d_i, d_j$ with $i\neq j$ that are divided by $f$. One obtains
\begin{align*}
\frac{|H_f\cap \tilde{M}_n|}{|\tilde{M}_n|}&\leq\frac{|H_f\cap \tilde{M}_{\hat{n}}|}{|\tilde{M}_{\hat{n}}|}\cdot\frac{|\tilde{M}_{\hat{n}}|}{|\tilde{M}_n|}\\
&\leq (1-W_f)\cdot\frac{(n+t^{-\deg(f)}-r)^N+N\cdot(n+t^{-\deg(f)}-r)^{N-1}}{n^N+N\cdot n^{N-1}}\\
&\leq (1- W_f)\cdot \frac{(2n)^N+N\cdot(2n)^{N-1}}{n^N}\leq(1-W_f) \cdot( 2^N+N\cdot 2^{N-1}).
\end{align*}
Note that for $m> 1$, one only knows $t^{-\deg(f)}\leq n^m$ and one could not get a bound that is independent of $n$ like in the preceding estimations.

Since, for $m=1$, $W_f$ is equal to the probability that at most one of the polynomials $d_1,\hdots,d_N$ is divided by $f$, it holds
$W_f=(1-t^{\deg(f)})^N+Nt^{\deg(f)}(1-t^{\deg(f)})^{N-1}$.
As in the proof of Corollary \ref{gem} one has
\begin{align*}
(1-t^{\deg(f)})^N+Nt^{\deg(f)}(1-t^{\deg(f)})^{N-1}
=1-\binom{N}{2}t^{2\deg(f)}+\sum_{k=3}^{N}\alpha_kt^{k\cdot\deg(f)},
\end{align*}
with coefficients $\alpha_k\in \mathbb N$ that are independent of $\deg(f)$. It follows
$$1-W_f=\binom{N}{2}t^{2\deg(f)}+\sum_{k=3}^{N}\alpha_kt^{k\cdot\deg(f)}.$$
In summary, with $C(N):=2^N+N\cdot 2^{N-1}$, one gets
\begin{align*}
&\sum_{f\in\hat{P}\setminus P_g}\frac{|H_f\cap \tilde{M}_n|}{|\tilde{M}_n|}\leq C(N)\sum_{f\in\hat{P}\setminus P_g}\left(\binom{N}{2}t^{2\deg(f)}+\sum_{k=3}^{N}\alpha_kt^{k\cdot\deg(f)}\right)=\\
&=C(N)\sum_{j=g+1}^{\infty}\varphi_j\left(\binom{N}{2}t^{2j}+\sum_{k=3}^{N}\alpha_kt^{k\cdot j}\right)\\
&\leq C(N)\sum_{j=g+1}^{\infty}\binom{N}{2}t^{j}+\sum_{k=3}^{N}\alpha_kt^{(k-1)\cdot j},
\end{align*}
which converges uniformly in $n$ since the convergent bound is independent of $n$.

\end{remark}

\end{proof}
It remains to compute $W_j(N)$. To this end, we will firstly prove a recursion formula for it.

\begin{lemma}\label{reccon}\ \\
Let $\hat{A}$ be the set of matrices $K_i$ for which $\mathcal{K}_N$ has full row rank and $\det(K_i)=0$ for $i=1,\hdots,N$. Moreover, denote by $\hat{W}_j(N)$ the probability of $\hat{A}$. With $W_j(0)=W_j(1)=1$, it holds for $N\geq 2$:
\begin{align*}
W_j(N)&=\sum_{i=1}^{N}(-1)^{i-1}\binom{N}{i}\left(t^{jm^2}|GL_m(\mathbb F^j)|\right)^iW_j(N-i)+\hat{W}_j(N).
\end{align*}
\end{lemma}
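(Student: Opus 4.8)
The plan is to condition on which of the blocks $K_i$ are invertible and to use Lemma \ref{ki} to collapse every invertible block, thereby reducing $\mathcal K_N$ to a smaller matrix of the same bidiagonal shape. First I would record the single-block probability: since $\mathbb F^j$ has $t^{-j}$ elements, there are $t^{-jm^2}$ matrices in $(\mathbb F^j)^{m\times m}$, of which $|GL_m(\mathbb F^j)|$ are invertible, so a uniformly chosen $K_i$ is invertible with probability $p:=t^{jm^2}|GL_m(\mathbb F^j)|$. Writing $A$ for the event that $\mathcal K_N$ has full row rank and $V_i$ for the event $\det(K_i)\neq 0$, the set $A$ splits into the two disjoint pieces $A\cap\bigcup_i V_i$ (at least one block invertible) and $A\cap\bigcap_i V_i^c$ (every block singular). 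The latter is by definition the set $\hat A$, contributing $\hat W_j(N)$.

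For the former piece I would apply the inclusion--exclusion principle (Lemma \ref{ine}) to the finite sets $A\cap V_i$. Since $\bigcap_{i\in I}(A\cap V_i)=A\cap V_I$, where $V_I:=\bigcap_{i\in I}V_i$, this gives
$$P\Big(A\cap\bigcup_i V_i\Big)=\sum_{\emptyset\neq I\subseteq\{1,\dots,N\}}(-1)^{|I|-1}P(A\cap V_I).$$

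The crux is to evaluate $P(A\cap V_I)$. On $V_I$ all blocks indexed by $I$ are invertible, so Lemma \ref{ki} says that $\mathcal K_N$ has full row rank if and only if the reduced matrix $\mathcal K^{(I)}_{N-|I|}$, built from the remaining $N-|I|$ blocks, does. Because the $K_i$ are chosen independently and uniformly, conditioning on $V_I$ constrains only the blocks in $I$ and leaves those outside $I$ independent and uniform over $(\mathbb F^j)^{m\times m}$; since $\mathcal K^{(I)}_{N-|I|}$ is itself a bidiagonal matrix of $N-|I|$ such uniform blocks, its full-rank probability is precisely $W_j(N-|I|)$. Together with $P(V_I)=p^{|I|}$ this yields $P(A\cap V_I)=p^{|I|}W_j(N-|I|)$, which depends on $I$ only through its cardinality. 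Grouping the inclusion--exclusion sum by $i=|I|$ (there being $\binom{N}{i}$ sets of size $i$) and adding the $\hat W_j(N)$ contribution produces the asserted recursion; the conventions $W_j(0)=W_j(1)=1$ cover the terms $i=N$ and $i=N-1$, where the reduced matrix has no block-rows and is vacuously of full rank.

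I expect the main obstacle to be the careful justification of the identity $P(A\cap V_I)=p^{|I|}W_j(N-|I|)$. Two points must be checked: that the repeated application of Lemma \ref{ki} genuinely reproduces a matrix of type $\mathcal K_{N-|I|}$ (using that the $K_i$ are identically distributed and independent, so both the order of removal of the collapsed blocks and any relabelling of the survivors are immaterial), and that conditioning on the invertibility of the $I$-blocks does not bias the distribution of the surviving blocks, so that the surviving configuration is again an unconditioned uniform sample. Once these are in place, the inclusion--exclusion bookkeeping and the passage to the binomial coefficients are routine.
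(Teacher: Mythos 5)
Your proposal is correct and follows essentially the same route as the paper: the paper likewise applies inclusion--exclusion to the events $A_i=\{\det(K_i)\neq 0\ \text{and}\ \mathcal K^{(i)}_{N-1}\ \text{of full row rank}\}$ (which by Lemma \ref{ki} coincide with your $A\cap V_i$), uses Lemma \ref{ki} to identify $\bigcap_{i\in I}A_i$ with $\{\det(K_i)\neq 0\ \text{for}\ i\in I\ \text{and}\ \mathcal K^{(I)}_{N-|I|}\ \text{of full row rank}\}$, obtains $P_I=\left(t^{jm^2}|GL_m(\mathbb F^j)|\right)^{|I|}W_j(N-|I|)$ by the same independence argument, and groups by $|I|$. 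Your additional remarks on the unbiased distribution of the surviving blocks and on the boundary cases $i=N-1,N$ only make explicit what the paper leaves implicit.
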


\begin{proof}\ \\
If $\det(K_i)\neq 0$ for some $i\in\{1,\hdots,N\}$, it follows from Lemma \ref{ki} that $\mathcal{K}_{N}$  has full row rank if and only if the matrix $\mathcal{K}^{(i)}_{N-1}$ formed by the matrices from the set $\{K_1,\hdots,K_N\}\setminus\{K_i\}$ has full row rank. Using the inclusion-exclusion-principle with $\hat{A}$ and $A_i:=\{\det(K_i)\neq 0\ \text{and}\ \mathcal{K}^{(i)}_{N-1}\ \text{is of full row rank}\}$, where $\hat{A}\cap A_i=\emptyset$, $P_i:=\operatorname{Pr}(A_i)$ should denote the (uniform) probability of $A_i$ (under random choice of $K_i$), for $i=1,\hdots,N$ and $P_I:=\operatorname{Pr}\left(\bigcap_{i\in I}A_i(N)\right)$, one gets 
$$W_j(N)=\sum_{I\subset\{1,\hdots,N\}}(-1)^{|I|-1}P_I+\hat{W}_j(N).$$
Using Lemma \ref{ki}, one obtains\\
$\bigcap_{i\in I}A_i(N)=\{\det(K_i)\neq 0\ \text{for}\ i\in I\ \text{and}\ \mathcal{K}^{(I)}_{N-|I|}\ \text{has full row rank}\}$ and therefore, $P_I=\left(t^{jm^2}|GL_m(\mathbb F^j)|\right)^iW_j(N-i)$ for every $I$ with $|I|=i$. Since there are $\binom{N}{i}$ subsets of cardinality $i$, the formula follows.
\end{proof}

\begin{corollary}\label{mklein}\ \\
For $m\leq N-1$, it holds
$$W_j(N)=\sum_{i=1}^{N}(-1)^{i-1}\binom{N}{i}\left(t^{jm^2}|GL_m(\mathbb F^j)|\right)^iW_j(N-i).$$
\end{corollary}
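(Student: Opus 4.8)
By Lemma \ref{reccon}, the claimed identity is exactly the formula of Lemma \ref{reccon} with the extra summand $\hat{W}_j(N)$ deleted. Hence the plan is to show that $\hat{W}_j(N)=0$ whenever $m\leq N-1$. Since $\hat{W}_j(N)$ is the uniform probability of the finite set $\hat{A}$, it suffices to prove that $\hat{A}=\emptyset$ in this range, i.e. the purely deterministic statement: if $\det(K_i)=0$ for every $i\in\{1,\hdots,N\}$, then $\mathcal{K}_N$ is \emph{not} of full row rank.

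First I would analyze the left null space of $\mathcal{K}_N$. Writing a candidate left null vector in block form as $v=(v_1,\hdots,v_{N-1})$ with $v_l\in(\mathbb F^j)^m$, the bidiagonal block structure of $\mathcal{K}_N$ shows that $v\mathcal{K}_N=0$ is equivalent to the $N$ block equations $v_1K_1=0$, $(v_{c-1}+v_c)K_c=0$ for $2\leq c\leq N-1$, and $v_{N-1}K_N=0$. Each of these is a homogeneous linear system in the entries of the big vector $v$.

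The key step is a dimension count. Since $\det(K_c)=0$, we have $\operatorname{rk}(K_c)\leq m-1$, so the $c$-th block equation constrains $v$ to the kernel of a linear map whose image lies in the row space of $K_c$ and thus has dimension at most $m-1$; that is, it imposes at most $m-1$ independent scalar conditions on $v$. As $v$ ranges over a space of dimension $(N-1)m$ and there are $N$ such blocks, the solution space has dimension at least $(N-1)m-N(m-1)=N-m$. For $m\leq N-1$ this is at least $1$, so a nonzero left null vector exists and $\mathcal{K}_N$ cannot have full row rank. This yields $\hat{A}=\emptyset$, hence $\hat{W}_j(N)=0$, and substituting into Lemma \ref{reccon} gives the corollary.

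The main obstacle is the dimension-counting step: one must argue that the $N$ block conditions together cut out a subspace of codimension at most $N(m-1)$ — that is, that the condition $uK_c=0$ really costs only $\operatorname{rk}(K_c)$ rather than $m$ constraints, even though it is phrased as $u$ lying in the left kernel of $K_c$ — and then observe that the inequality $N(m-1)<(N-1)m$ is precisely equivalent to $m\leq N-1$. Note that an inductive argument via Lemma \ref{ki} is not available here, since that lemma requires some $K_i$ to be nonsingular, which fails everywhere on $\hat{A}$; this is exactly why the direct linear-algebra count is needed.
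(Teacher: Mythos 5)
Your proof is correct and is essentially the paper's argument in dual form: the paper notes that each singular block $K_c$ contributes at most $m-1$ to the column rank of $\mathcal{K}_N$, so $\operatorname{rk}(\mathcal{K}_N)\leq N(m-1)=Nm-N<Nm-m=(N-1)m$, while you phrase the same count as the left null space having dimension at least $(N-1)m-N(m-1)=N-m\geq 1$. Both reduce to $\hat{A}=\emptyset$, hence $\hat{W}_j(N)=0$, and then invoke Lemma \ref{reccon}.
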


\begin{proof}\ \\
If $\det(K_i)=0$ for $i=1,\hdots,N$, the column rank of $\mathcal{K}_N$ is at most $Nm-N<Nm-m=(N-1)m$ and therefore, one has no full row rank. 
Consequently, $\hat{W}_j=0$ and the statement follows from the preceding theorem.
\end{proof}

To obtain a formula for $W_j(N)$ in the general case, one finally needs to calculate $\hat{W}_j(N)$.

\begin{lemma}\ \\
For $j\in\mathbb N$ and $N\geq 2$, it holds:
$$\hat{W}_j(N)=\left(1-t^{jm^2}|GL_m(\mathbb F^j)|\right)^N-\sum_{i=N-1}^{\min(m,N-1)}t^{j(m+1)}+O(t^{(m+2)j}).$$
\end{lemma}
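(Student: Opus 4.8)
The plan is to split $\hat W_j(N)$ as the probability that all $K_i$ are singular minus the probability that they are all singular \emph{and} $\mathcal K_N$ fails to have full row rank. Writing $p:=t^{jm^2}|GL_m(\mathbb F^j)|=\prod_{k=1}^m(1-t^{jk})$ for the probability that a single $K_i$ is invertible over $\mathbb F^j$ (Lemma \ref{glcar}), and using that the $K_i$ are chosen independently, one has $\operatorname{Pr}(\det K_i=0\text{ for all }i)=(1-p)^N$, so
$$\hat W_j(N)=(1-p)^N-R,\qquad R:=\operatorname{Pr}\big(\det K_i=0\ \forall i\ \text{and}\ \mathcal K_N\ \text{not of full row rank}\big).$$
It then remains to show $R=t^{j(m+1)}+O(t^{(m+2)j})$ when $N\le m+1$ and $R=O(t^{(m+2)j})$ when $N>m+1$; this is exactly the asserted formula, since the degenerate sum $\sum_{i=N-1}^{\min(m,N-1)}t^{j(m+1)}$ equals $t^{j(m+1)}$ precisely when $N\le m+1$ (then it has the single term $i=N-1$) and is empty otherwise.

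First I would convert the rank defect into a condition on the left kernels $L_c:=\{w\in(\mathbb F^j)^{1\times m}\mid wK_c=0\}$. Writing a left null vector of $\mathcal K_N$ as $v=(v_1,\dots,v_{N-1})$ and reading off the block columns, $v\mathcal K_N=0$ is equivalent to $u_cK_c=0$ for $c=1,\dots,N$, where $u_1=v_1$, $u_c=v_{c-1}+v_c$ for $2\le c\le N-1$, and $u_N=v_{N-1}$. The map $v\mapsto(u_1,\dots,u_N)$ is injective and, by telescoping, its image is exactly the hyperplane $V=\{(u_c)\mid\sum_{c=1}^N(-1)^cu_c=0\}$. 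Hence $\mathcal K_N$ fails to have full row rank if and only if the linear map $\prod_{c=1}^NL_c\to(\mathbb F^j)^m$, $(u_c)\mapsto\sum_c(-1)^cu_c$, has nontrivial kernel. Writing $r_c:=\dim L_c=\operatorname{corank}(K_c)=m-\operatorname{rk}(K_c)$, such a defect is forced whenever $\sum_cr_c>m$, cannot occur for generic kernels when $\sum_cr_c<m$, and in the borderline case reduces to a linear dependence among chosen kernel vectors.

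Next I would stratify $R$ according to the corank profile $(r_1,\dots,r_N)$, all $r_c\ge1$ since every $K_i$ is singular. By Lemma \ref{glcar}, $\operatorname{Pr}(\operatorname{corank}(K_i)=r)=\Theta(t^{jr^2})$, and in particular $\operatorname{Pr}(\operatorname{corank}=1)=t^j+O(t^{2j})$, so a stratum with coranks $(r_c)$ costs $t^{j\sum_cr_c^2}(1+O(t^j))$. In the dominant stratum $r_1=\dots=r_N=1$, costing $t^{jN}+O(t^{j(N+1)})$, each $L_c=\langle w_c\rangle$ is a line, the map of the previous paragraph is noninjective exactly when $w_1,\dots,w_N$ are linearly dependent, and conditioning on corank $1$ the directions $w_c$ are independent uniform lines in $(\mathbb F^j)^m$ (by $GL_m$-invariance and transitivity on lines). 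A direct count gives $\operatorname{Pr}(w_1,\dots,w_N\ \text{dependent})=t^{j(m-N+1)}+O(t^{j(m-N+2)})$ for $N\le m$ and $=1$ for $N\ge m+1$. Multiplying, this stratum contributes $t^{jN}\cdot t^{j(m-N+1)}=t^{j(m+1)}$ with coefficient $1$ whenever $N\le m$, likewise $t^{j(m+1)}$ when $N=m+1$, while for $N>m+1$ its cost $t^{jN}$ is already $O(t^{(m+2)j})$.

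The hard part will be showing that every other stratum contributes $O(t^{(m+2)j})$. For a profile with total corank $s_0:=\sum_cr_c$, the conditional failure probability equals $1$ if $s_0>m$ and is $O(t^{j(m-s_0+1)})$ if $s_0\le m$ (a dependence count among $s_0$ kernel vectors), so the stratum contributes $O\big(t^{j(\sum_cr_c^2+\max(0,\,m-s_0+1))}\big)$. Over all profiles $(r_c)\ne(1,\dots,1)$ with $r_c\ge1$ this exponent is at least $m+3$: raising a single corank from $1$ to $2$ increases $\sum_cr_c^2$ by exactly $3$ while decreasing $\max(0,m-s_0+1)$ by at most $1$, a net increase of at least $2$ over the all-ones total exponent $m+1$, and larger coranks or further raises only increase it more. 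Thus $R=t^{j(m+1)}+O(t^{(m+2)j})$ for $N\le m+1$ and $R=O(t^{(m+2)j})$ for $N>m+1$, which with $\hat W_j(N)=(1-p)^N-R$ yields the claim. The delicate points to verify are that the leading coefficients in both $\operatorname{Pr}(\operatorname{corank}=1)$ and the dependence count are exactly $1$, so that no spurious multiple of $t^{j(m+1)}$ is produced, and that for $N>m+1$ the forced defect ($s_0\ge N>m$) makes $\hat W_j(N)$ vanish up to the error, consistently with $(1-p)^N=O(t^{(m+2)j})$.
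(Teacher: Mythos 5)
Your proposal is correct, and while it shares the paper's skeleton --- the same splitting $\hat W_j(N)=\bigl(1-t^{jm^2}|GL_m(\mathbb F^j)|\bigr)^N-\tilde W$ with $\tilde W$ the probability that all $K_i$ are singular and $\mathcal K_N$ is rank-deficient, the same block equations $u_cK_c=0$ for a left null vector, and the same stratification by the corank profile with the all-ones stratum as the only one contributing at order $t^{j(m+1)}$ --- you execute the key estimate genuinely differently. The paper counts configurations $(\xi_1,\dots,\xi_{N-1},K_1,\dots,K_N)$ and must then (i) dispose of the degenerate $\xi$'s (some $\xi_i=0$ or $\xi_i+\xi_{i+1}=0$) by an induction on $N$ that reduces to a smaller $\mathcal K$, and (ii) correct for the fact that several $\xi$'s produce the same tuple of matrices, which forces the delicate final argument that linearly dependent $\xi$'s are negligible and that linearly independent ones coincide only up to a common scalar. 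You avoid both issues by reformulating rank deficiency as an intrinsic event on the random left kernels $L_c$ --- namely $\dim(L_1+\cdots+L_N)<\sum_c\dim L_c$ --- and computing its conditional probability directly, using that, given the coranks, the $L_c$ are independent uniform subspaces of the prescribed dimensions; no overcounting correction and no induction on $N$ are needed, and the exponent bookkeeping $\sum_c r_c^2+\max(0,m-s_0+1)\ge m+3$ off the all-ones stratum is a clean way to dismiss all other profiles at once. What this buys is transparency of the coefficient $1$ of $t^{j(m+1)}$: it is the leading term of $\Pr(\mathrm{corank}=1)^N$ times that of the probability that $N$ independent uniform lines in $(\mathbb F^j)^m$ are linearly dependent. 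The points you flag as delicate are indeed the ones to write out --- that both leading coefficients are exactly $1$, and the uniform $O(t^{j(m-s_0+1)})$ bound for the non-direct-sum event --- but they check out, so I see no gap.
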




\begin{proof}\ \\
Denote by $\tilde{W}$ the probability that $\det(K_i)=0$ for $i=1,\hdots,N$ and $\mathcal{K}_N$ is not of full row rank. We will show 
$$\tilde{W}=\sum_{i=N-1}^{\min(m,N-1)}t^{(m+1)j}+O(t^{(m+2)j}).$$ 
The result follows since the sum of $\tilde{W}$ and $\hat{W}_j(N)$ is equal to the probability that $\det(K_i)=0$ for $i=1,\hdots,N$.

If $m<N-1$, the probability that $\det(K_i)=0$ for $i=1,\hdots,N$ is equal to $\left(1-t^{jm^2}|GL_m(\mathbb F^j)|\right)^N=(1-(1-t^j+O(t^{j+1}))^N=O(t^{jN})=O(t^{j(m+2)})$, which is conform with $\sum_{i=N-1}^{\min(m,N-1)}t^{j(m+1)}=0$ in this case.

Next, consider the case $m\geq N-1$. We have to compute the probability that there exists $\xi\in(\mathbb F^j)^{1\times m(N-1)}\setminus\{0\}$ with $\xi\mathcal{K}_N=0$, i.e. that there exist $\xi_i\in(\mathbb F^j)^{1\times m}$ for $i=1,\hdots,N-1$ which are not all identically zero such that $\xi_1K_1=(\xi_1+\xi_2)K_2=\cdots=(\xi_{N-2}+\xi_{N-1})K_{N-1}=\xi_{N-1}K_N=0$.
As in the proof for Lemma 10 of \cite{l}, one could show that either $\xi_i\neq 0$ for $i=1,\hdots,N-1$ and $\xi_i+\xi_{i+1}\neq 0$ for $i=1,\hdots,N-2$ or there exists $i\in\{1,\hdots,N\}$ such that $\mathcal{K}^{(i)}_{N-1}$ formed by the matrices from the set $\{K_1,\hdots,K_N\}\setminus\{K_i\}$ is not of full row rank. Per induction with respect to $N$, one knows that the probability for this is $O(t^{j(m+1)})$. Multiplication with the probability that $\det(K_i)=0$, which is $O(t^j)$, leads to a term for the probability that is $O(t^{j(m+2)})$. Note that one could use induction since for $N=2$, $\hat{W}$ is just equal to $1-t^{j\cdot 2m^2}N(2m,m,m)=1-\prod_{l=m+1}^{2m}(1-t^{jl})$ (see Lemma \ref{glcar}) because that $[K_1\ K_2]$ is not of full row rank already implies $\det(K_1)=\det(K_2)=0$. Thus, one could assume $\xi_i\neq 0$ and $\xi_i+\xi_{i+1}\neq 0$.

According to Lemma \ref{glcar}, the probability that $\dim(\ker(K_i))=r_i$ is equal to 
\begin{align*}
t^{jm^2}\cdot N(m,m,m-r_i)&=t^{jmr_i}\cdot\prod_{l=r_i+1}^{m}(1-t^{jl})\prod_{l=0}^{m-(r_i+1)}\frac{t^{j(l-m)}-1}{t^{-j(l+1)}-1}=\\
&=t^{jmr_i}(1+O(t^j))\cdot \frac{\prod_{l=r_i+1}^{m}(t^{-jl}-1)}{\prod_{l=1}^{m-r_i}(t^{-jl}-1)}=\\
&=t^{jmr_i}(1+O(t^j))\cdot t^{-\frac{j}{2}\left(m(m+1)-r_i(r_i+1)-(m-r_i)(m-r_i+1)\right)}\\
&=t^{jr_i^2}(1+O(t^j)).
\end{align*}
Fix $1\leq r_i\leq m$ for $i=1,\hdots,N$. Then, the probability that $\dim(\ker(K_1))=r_1$ is $t^{jr_1^2}\cdot (1+O(t^j))$. For each such matrix $K_1$, there are $t^{-jr_1}$ possibilities for $\xi_1\in(\mathbb F^j)^{1\times m}$ with $\xi_1K_1=0$. Furthermore, the probability that $\dim(\ker(K_2))=r_2$ is $t^{jr_2^2}\cdot (1+O(t^j))$ and for fixed $\xi_1$ and $K_2$, there are $t^{-jr_2}$ possibilities for $\xi_2\in(\mathbb F^j)^{1\times m}$ such that $(\xi_1+\xi_2)K_2=0$. This procedure is continued until $K_i$ and $\xi_i$ are fixed for $i=1,\hdots,N-1$.
As we assumed $\xi_{N-1}\neq 0$, the probability that $\xi_{N-1}K_N=0$ is equal to $t^{jm}$.

Finally, one has to consider, which values for $\xi_1,\hdots,\xi_{N-1}$ lead to the same solutions for $K_1,\hdots,K_N$. One clearly gets the same solutions if one multiplies $\xi_i$ for $i=1,\hdots,N-1$ by the same scalar value, which effects a factor that is $O(t^j)$ for the probability. In summary, the overall probability is $O\left(t^{j\left(\sum_{i=1}^{N-1}(r_i^2-r_i)+m+1\right)}\right)(1+O(t^j))$. Hence, all cases in which $r_i\geq 2$ for some $i\in\{1,\hdots,N-1\}$ could be neglected.

It remains to show that for $r_1=\cdots=r_N=1$, only $\xi_1,\hdots,\xi_{N-1}$ which differ all by the same scalar factor lead to the same solutions for $K_1,\hdots,K_N$. Then, one knows that the factor for the probability caused by this effect is exactly $t^j$ and one gets a overall probability of $t^{(m+1)j}+O(t^{(m+2)j})$, which is conform with $\sum_{i=N-1}^{\min(m,N-1)}t^{j(m+1)}=t^{j(m+1)}$ in the considered case $m\geq N-1$.

To do this, we firstly show that the case that $\xi_1,\hdots,\xi_{N-1}$ are linearly dependent could be neglected. For the choice of such vectors $\xi_i$ with the property that $\operatorname{rk}[\xi_1^\top\cdots \xi_{N-1}^\top]<N-1$ one has\\
$O\left(\sum_{r=1}^{N-2} N(m,N-1,r)\right)=O\left( \sum_{r=1}^{N-2}t^{-jr(m+N-1-r)}\right)=O(t^{-j(N-2)(m+1)})$ possibilities and for each of these possibilities the probability that $\xi_1K_1=(\xi_1+\xi_2)K_2=\cdots=(\xi_{N-2}+\xi_{N-1})K_{N-1}=\xi_{N-1}K_N=0$ is equal to $t^{jNm}$ as $\xi_i\neq 0$ and $\xi_i+\xi_{i+1}\neq 0$. Additionally, one has again a factor of $O(t^j)$ because of the values for the vectors $\xi_i$ that lead to the same solutions for $K_1,\hdots,K_N$. In summary, one gets a probability that is $O(t^{j(Nm+1-(N-2)(m+1))})=O(t^{j(m+2)})$ since $-N\geq -m-1$.

Hence, in the following, one could assume that $\xi_1,\hdots,\xi_{N-1}$ are linearly independent.
If $\xi_1K_1=\tilde{\xi}_1K_1=0,(\xi_1+\xi_2)K_2=(\tilde{\xi}_1+\tilde{\xi}_2)K_2=0, \hdots, \xi_{N-1}K_N=\tilde{\xi}_{N-1}K_N=0$, it results from $r_1=\cdots=r_N=1$ that there exist $\lambda_i\in\mathbb F^j$ with $\tilde{\xi}_1=\lambda_1\xi_1$, $\tilde{\xi_i}+ \tilde{\xi}_{i+1}=\lambda_{i+1}(\xi_i+\xi_{i+1})$ for $i=1,\hdots,N-2$ and $\tilde{\xi}_{N-1}=\lambda_{N}\xi_{N-1}$. Since 
 $\tilde{\xi}_1-(\tilde{\xi}_1+\tilde{\xi}_2)+\cdots \pm (\tilde{\xi}_{N-2}+\tilde{\xi}_{N-1})\mp \tilde{\xi}_{N-1}=0$, it follows $(\lambda_1-\lambda_2)\xi_1+(\lambda_3-\lambda_2)\xi_2+\cdots\pm(\lambda_{N-1}-\lambda_N)\xi_{N-1}=0$. As $\xi_1,\hdots,\xi_{N-1}$ are linearly independent, this implies $\lambda_1=\cdots=\lambda_N$, which completes the proof of the whole theorem.
\end{proof}

Now, we are able to solve the recursion formula of Lemma \ref{reccon} to achieve an explicit expression for $W_j(N)$.

\begin{theorem}\ \\
For $j\in\mathbb N$ and $N\geq 2$, the probability that $N$ constant matrices from $(\mathbb F^j)^{m\times m}$ are mutually left coprime is equal to
$$W_j(N)=1-\sum_{y=2}^{m+1}\binom{N}{y}t^{j(m+1)}+O(t^{j(m+2)}).$$
\end{theorem}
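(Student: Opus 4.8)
The plan is to run strong induction on $N$, using the recursion of Lemma \ref{reccon} together with the just-computed expansion of $\hat{W}_j(N)$ as the engine. The base cases $N=0$ and $N=1$ are covered by $W_j(0)=W_j(1)=1$, which is consistent with the claimed formula because $\binom{0}{y}=\binom{1}{y}=0$ for every $y\ge 2$. For the inductive step I abbreviate $G:=t^{jm^2}|GL_m(\mathbb F^j)|$ and first record that, by Lemma \ref{glcar} applied to the field $\mathbb F^j$ of cardinality $t^{-j}$, one has $G=\prod_{l=1}^m(1-t^{jl})=1+O(t^j)$; in particular $1-G=O(t^j)$. The crucial algebraic observation is that the binomial theorem yields the exact identity $\sum_{i=1}^N(-1)^{i-1}\binom{N}{i}G^i=1-(1-G)^N$.

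Next I substitute the inductive hypothesis $W_j(N-i)=1-c_{N-i}\,t^{j(m+1)}+O(t^{j(m+2)})$, with $c_{N-i}:=\sum_{y=2}^{m+1}\binom{N-i}{y}$, into the recursion, and insert the expansion $\hat{W}_j(N)=(1-G)^N-\delta\,t^{j(m+1)}+O(t^{j(m+2)})$, where $\delta:=\sum_{i=N-1}^{\min(m,N-1)}1$ equals $1$ when $m\ge N-1$ and $0$ otherwise. Since $G^i=1+O(t^j)$, I may replace $G^i$ by $1$ inside the coefficient of $t^{j(m+1)}$ at the cost of $O(t^{j(m+2)})$, so that the leading part of the sum contributes exactly $1-(1-G)^N$ and the first-order correction contributes $-t^{j(m+1)}\sum_{i=1}^N(-1)^{i-1}\binom{N}{i}c_{N-i}$. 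The two copies of $(1-G)^N$ — one from $1-(1-G)^N$ and one from $\hat{W}_j(N)$ — then cancel exactly, leaving
\begin{align*}
W_j(N)=1-t^{j(m+1)}\left(\sum_{i=1}^N(-1)^{i-1}\binom{N}{i}c_{N-i}+\delta\right)+O(t^{j(m+2)}).
\end{align*}

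It then remains to identify the coefficient. Writing $c_{N-i}$ as a sum over $y$ and using $\binom{N}{i}\binom{N-i}{y}=\binom{N}{y}\binom{N-y}{i}$, the inner sum becomes $\binom{N}{y}\sum_{i=1}^{N-y}(-1)^{i-1}\binom{N-y}{i}$, which by Lemma \ref{bin} equals $\binom{N}{y}$ when $y<N$ and vanishes when $y\ge N$; hence $\sum_{i=1}^N(-1)^{i-1}\binom{N}{i}c_{N-i}=\sum_{y=2}^{\min(m+1,N-1)}\binom{N}{y}$. I expect the main obstacle to be reconciling this truncated range with the target range $2\le y\le m+1$: when $m<N-1$ the two ranges coincide and $\delta=0$, so nothing is needed; when $m\ge N-1$ the sum stops at $y=N-1$, the terms with $N<y\le m+1$ vanish because $\binom{N}{y}=0$, and the single missing term $\binom{N}{N}=1$ is supplied precisely by $\delta=1$. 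In both cases the coefficient equals $\sum_{y=2}^{m+1}\binom{N}{y}$, which closes the induction.
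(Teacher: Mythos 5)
Your proposal is correct and follows essentially the same route as the paper: induction on $N$ via the recursion of Lemma \ref{reccon}, the binomial identity $\sum_{i=1}^N(-1)^{i-1}\binom{N}{i}G^i=1-(1-G)^N$ cancelling against the $(1-G)^N$ term in $\hat W_j(N)$, and the same rearrangement of $\sum_i(-1)^{i-1}\binom{N}{i}\sum_y\binom{N-i}{y}$ via Lemma \ref{bin}, with the $\delta$ term supplying the missing $\binom{N}{N}$ when $m\ge N-1$. The only cosmetic difference is that you anchor the induction at $N=0,1$ (where the formula trivially reads $1$) instead of computing $N=2$ directly from Lemma \ref{glcar} as the paper does; both are valid.
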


\begin{proof}\ \\
This is shown per induction with respect to $N$. For $N=2$, one just has to compute the probability that a rectangular matrix is of full rank. According to Lemma \ref{glcar} with $n=2m$ and $k=r=m$, this probability is equal to $\prod_{i=m+1}^{2m}(1-(t^{j})^i)=1-t^{j(m+1)}+O(t^{j(m+2)})$.

Inserting the assumption of the induction into the first part of the recursion formula from Lemma \ref{reccon}, leads to
\begin{align*}
&\sum_{i=1}^{N}(-1)^{i-1}\binom{N}{i}\left(t^{jm^2}|GL_m(\mathbb F^j)|\right)^iW_j(N-i)=\\
&=\sum_{i=1}^{N}(-1)^{i-1}\binom{N}{i}\left(t^{jm^2}|GL_m(\mathbb F^j)|\right)^i\left(1-\sum_{y=2}^{m+1}\binom{N-i}{y}t^{j(m+1)}+O(t^{j(m+2)})\right)\\
&=\sum_{i=1}^{N}(-1)\binom{N}{i}\left((-1)t^{jm^2}|GL_m(\mathbb F^j)|\right)^i+\\
&+\sum_{i=1}^{N}(-1)^{i}\binom{N}{i}\sum_{y=2}^{m+1}\binom{N-i}{y}t^{j(m+1)}+O(t^{j(m+2)})=\\
&=-\left(1-t^{jm^2}|GL_m(\mathbb F^j)|\right)^N+1+\\
&+\sum_{i=1}^{N-2}(-1)^{i}\binom{N}{i}\sum_{y=2}^{m+1}\binom{N-i}{y}t^{j(m+1)}+O(t^{j(m+2)})=\\
&=-\left(1-t^{jm^2}|GL_m(\mathbb F^j)|\right)^N+1-\sum_{y=2}^{\min(m+1,N-1)}\binom{N}{y}t^{j(m+1)}+O(t^{j(m+2)})
\end{align*}
since 
\begin{align*}
&\sum_{i=1}^{N-2}(-1)^{i}\binom{N}{i}\sum_{y=2}^{m+1}\binom{N-i}{y}=\sum_{i=1}^{N-2}\sum_{y=2}^{\min(m+1,N-i)}(-1)^i\frac{N!}{i!\cdot y!\cdot (N-i-y)!}\\
&=\sum_{y=2}^{\min(m+1,N-1)}\sum_{i=1}^{N-y}(-1)^i\frac{N!}{i!\cdot y!\cdot (N-i-y)!}=-\sum_{y=2}^{\min(m+1,N-1)}\binom{N}{y},
\end{align*}
where the last step follows from Lemma \ref{bin}.
Using the formula for $\hat{W}_j(N)$ from the preceding lemma, one obtains
\begin{align*}
W_j(N)&=-\left(1-t^{jm^2}|GL_m(\mathbb F^j)|\right)^N+1-\sum_{y=2}^{\min(m+1,N-1)}\binom{N}{y}t^{j(m+1)}+\hat{W}_j(N)\\
&=1-\left(\sum_{i=N-1}^{\min(m,N-1)}1+\sum_{y=2}^{\min(m+1,N-1)}\binom{N}{y}\right)t^{j(m+1)}+O(t^{j(m+2)})=\\
&=1-\sum_{y=2}^{m+1}\binom{N}{y}t^{j(m+1)}+O(t^{j(m+2)}).
\end{align*}
The last equality is valid because if $m+1\leq N-1$, it holds $\sum_{i=N-1}^{\min(m,N-1)}1=0$ and if $m+1>N-1$, it holds\\ $\sum_{i=N-1}^{\min(m,N-1)}1+\sum_{y=2}^{\min(m+1,N-1)}\binom{N}{y}=1+\sum_{y=2}^{N-1}\binom{N}{y}=\sum_{y=2}^{N}\binom{N}{y}=\sum_{y=2}^{m+1}\binom{N}{y}$.
%
\end{proof}

\begin{theorem}\label{main}\ \\
The natural density of $D_i\in\mathbb F[z]^{m\times m}$ for $i=1,\hdots,N$ to be mutually left coprime is equal to
$$\prod_{j=1}^{\infty}\left(1-\sum_{y=2}^{m+1}\binom{N}{y}t^{j(m+1)}+O(t^{j(m+2)})\right)^{\varphi_j}=1-\sum_{y=2}^{m+1}\binom{N}{y}t^m+O(t^{m+1}).$$
\end{theorem}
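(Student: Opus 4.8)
The plan is to prove the second equality in the statement, the first being exactly the content of Theorem \ref{end}; that is, to expand the infinite product $\prod_{j=1}^{\infty}W_j(N)^{\varphi_j}$ in powers of $t$ up to order $t^m$. Write $c:=\sum_{y=2}^{m+1}\binom{N}{y}$, so that the preceding theorem reads $W_j(N)=1-c\,t^{j(m+1)}+O(t^{j(m+2)})$. The key structural remark I would exploit is that $W_j(N)$ is the probability computed over the field $\mathbb F^{j}$ with $t^{-j}$ elements, hence it is one and the same function evaluated at $t^{j}$; in particular the implied constant in the error $O(t^{j(m+2)})$ is \emph{uniform in $j$}. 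This uniformity is what later legitimizes controlling the infinite tail of the product.

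First I would isolate the factor $j=1$. Here $\varphi_1=t^{-1}$ exactly, since the $t^{-1}$ monic linear polynomials are all irreducible (cf. Lemma \ref{anzirred} with $j=1$). Taking logarithms and using $\log(1+x)=x+O(x^2)$ gives $\varphi_1\log W_1(N)=t^{-1}\bigl(-c\,t^{m+1}+O(t^{m+2})\bigr)=-c\,t^{m}+O(t^{m+1})$, whence $W_1(N)^{\varphi_1}=\exp\bigl(-c\,t^{m}+O(t^{m+1})\bigr)=1-c\,t^{m}+O(t^{m+1})$.

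Next I would show that the remaining factors contribute only to the error term. From Lemma \ref{anzirred}, $\varphi_j=\tfrac1j t^{-j}+O(t^{-(j-1)})$, so $\varphi_j\cdot t^{j(m+1)}=O(t^{jm})$ and the higher powers of the correction are of strictly smaller order; therefore $W_j(N)^{\varphi_j}=1+O(t^{jm})$ for each $j\geq 2$, with the $O$-constant independent of $j$ by the uniformity remark above. Since $m\geq 1$ forces $jm\geq 2m\geq m+1$ for all $j\geq 2$, summing the geometric-type tail yields $\sum_{j\geq 2}O(t^{jm})=O(t^{2m})=O(t^{m+1})$, and consequently $\prod_{j\geq 2}W_j(N)^{\varphi_j}=1+O(t^{m+1})$. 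Multiplying the two pieces, $\bigl(1-c\,t^{m}+O(t^{m+1})\bigr)\bigl(1+O(t^{m+1})\bigr)=1-c\,t^{m}+O(t^{m+1})$, which is the asserted formula with $c=\sum_{y=2}^{m+1}\binom{N}{y}$.

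I expect the main obstacle to be the passage from the finite partial products to the full infinite product without degrading the $O(t^{m+1})$ error: one must bound $\sum_{j\geq 2}\varphi_j\,|\log W_j(N)|$ by a convergent series whose sum is $O(t^{m+1})$, which works only because the decay $t^{j(m+1)}$ beats the growth $\varphi_j\sim t^{-j}$ and the implied constants do not depend on $j$. Granting this uniformity, the rest is the same elementary $\log$/$\exp$ bookkeeping used in the scalar case of Corollary \ref{gem}, now carried only to the first nontrivial order.
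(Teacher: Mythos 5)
Your argument is correct. The paper in fact states Theorem \ref{main} without any written proof, leaving the first equality to Theorem \ref{end} combined with the preceding formula for $W_j(N)$, and the second equality to an expansion of the infinite product in the spirit of Corollary \ref{gem}; your $\log/\exp$ bookkeeping, with the explicit observation that $W_j(N)$ is a single function evaluated at $t^j$ (so the implied constants are uniform in $j$) and the tail bound $\sum_{j\geq 2}\varphi_j\,|\log W_j(N)|=O(t^{2m})=O(t^{m+1})$, supplies exactly the omitted details and is, if anything, more careful about the infinite tail than the paper's own treatment of the scalar case.
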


%
%
%

\section{Conclusion}
We computed the natural density of mutually left coprime polynomial matrices and compared the result with the uniform probability of mutual left coprimeness. If the considered matrices are scalar, i.e. for the case of pairwise coprime polynomials, we could even show a more precise estimation than in the general case.
It is remarkable that probability and natural density asymptotically coincide in all considered cases. However, the exact values for these two concepts of probability might differ. For the case of pairwise coprimeness of scalar polynomials, we have already seen that the coefficient of $t^2$ depends on the degrees of the constituent polynomials and is different from the coefficient of $t^2$ in the series expansion of the formula for the natural density if $N_1\neq 0$. Moreover, it is not difficult to see that further coefficients will also depend on the degrees of the involved polynomials.

For $m\geq 2$, the exact value for the uniform probability depends on the degrees of the determinants of the constituent matrices and therefore, does not coincide with the natural density for each degree structure.
Consider for example the case $m=2$ and $\deg(\det(D_i))=1$ for $i=1,2$. Easy computation yields that the uniform probability of left coprimeness is equal to $1-\frac{1}{t^{-2}+t^{-1}}=1-t^2\sum_{k=0}^{\infty}(-t)^k$, which is larger than the natural density being equal to $(1-t^2)(1-t^3)$. One could expect that with increasing the values $n_i$, the number of coinciding coefficients between uniform probability and natural density increases. But it is still an open question if the uniform probability of mutual left coprimeness tends to the value of the natural density if $n_i\rightarrow\infty$ for $i=1,\hdots,N$.


\bibliography{mybibfile}

\end{document}